\newcommand{\nats}{\mathbb N}
\newcommand{\A}{\mathbb A}
\newcommand{\less}{\prec}
\newcommand{\Lyn}{\mathscr{L}_\omega}
\newtheorem{theorem}{Theorem}[section]
\newtheorem{corollary}[theorem]{Corollary}
\newtheorem{lemma}[theorem]{Lemma}
\newtheorem{proposition}[theorem]{Proposition}
\theoremstyle{definition}
\newtheorem{remark}{Remark}
\newtheorem{definition}[theorem]{Definition}
\newenvironment{dedication}
  {\vspace*{1cm} 
   \itshape  
   \raggedleft   
  }
  {\par 
  }
\begin{document}
\title{\bf $\omega$-Lyndon words}
\author{Micka\"el Postic\textsuperscript{1} and Luca Zamboni\textsuperscript{1}}
\date{}
\maketitle
\begin{center}
\textsuperscript{1}Universit\'{e} de Lyon, Universit\'{e} Lyon  1,  CNRS  UMR  5208,  Institut  Camille  Jordan,  43 boulevard du 11 novembre 1918, F69622 Villeurbanne Cedex, France\\
\texttt{\{postic,zamboni\}@math.univ-lyon1.fr}

\begin{dedication}
\textbf{In Memory of the Late Professor Aldo de Luca }
\end{dedication}
\end{center}

\vspace{0.5cm}

{\bf Abstract:} {Let $\A$ be a finite non-empty set and $\preceq $  a total order on   $\A^\nats$  verifying the following lexicographic like condition:  For each $n\in \nats$ and  $u, v\in \A^n,$ if $u^\omega \prec v^\omega$   then  $ux\prec vy$ for all $x, y \in \A^\nats.$ A word $x\in \A^\nats$ is called $\omega$-Lyndon if $x\prec y$ for each proper suffix $y$ of $x.$  A finite word $w\in \A^+$ is called $\omega$-Lyndon if $w^\omega \prec v^\omega$ for each proper suffix $v$ of $w.$ In this note we prove that every infinite word may be written uniquely as a non-increasing product of $\omega$-Lyndon words.}\\


\noindent {\bf Keywords:} {Lyndon words, lexicographic orders.}

\section{Introduction}

Given a finite non-empty set $\A,$ let $\A^+=\bigcup_{n\in \nats}\A^n$ denote the free semigroup generated by $\A$ consisting of  all finite words over $\A$ and let $\A^\nats=\{a_1a_2a_3\cdots \,|\, a_i\in \A\}$ be  the set of all right infinite words over $\A.$ We also let $\A^*=\A^+\cup \{\varepsilon\}$ where $\varepsilon$ denotes the empty word.  Let $\preceq $ be a total order on $\A^\nats$ verifying the following lexicographic like condition denoted (*):  For each $n\in \nats$ and $u, v\in \A^n$ if $u^\omega \less v^\omega$  then  $ux\less vy$ for all $x, y \in \A^\nats.$ 
A special case of this setting was previously considered in \cite{DRR}. The authors consider a sequence  $(<_n)_{n\in \nats}$ of total orders on $\A.$
 This induces a total lexicographic like order $\preceq$ on $\A^\nats$ defined by $x\preceq y$ if and only if either $x=y$ or if $x=uax'$ and $y=uby'$ for some $u\in \A^*,$ $a,b\in \A$ and $x',y' \in \A^\nats$ and $a<_{|u|+1}b.$ It is easily checked that the induced total order on $\A^\nats$ satisfies condition (*).  However the two settings are not equivalent. In fact, in the context of the generalised lexicographic order  in  \cite{DRR}, if for example  $(bb)^\omega \prec (ba)^\omega$ then it would mean that $b<_2a$ and hence $(ab)^\omega \prec (aa)^\omega.$ This is no longer true in the setting considered herein as one may have  $(bb)^\omega \prec (ba)^\omega$ and $(aa)^\omega \prec (ab)^\omega.$ 
In \cite{DRR} the authors introduce the notion of {\it generalised Lyndon words} with respect to the induced total order on $\A^\nats:$ a finite word $w\in \A^+$ is a generalised Lyndon word if $w^\omega \prec v^\omega$ for every proper suffix $v$ of $w.$ They then prove that every finite word $w\in \A^+$ may be written uniquely in the form $w=l_1\cdots l_k$ where each $l_i$ is a generalised Lyndon word and $l_1^\omega \succeq l_2^\omega \succeq \cdots \succeq l_k^\omega$ (see Theorem 16 in \cite{DRR}).  Analogously, given a total order on $\A^\nats$ verifying (*) we adopt the above definition to define the notion of  {\it $\omega$-Lyndon words}: a finite word $w$ is $\omega$-Lyndon if $w^\omega \prec v^\omega$ for every proper suffix $v$ of $w.$ We may also borrow from the usual definition of infinite Lyndon words to define a class of infinite $\omega$-Lyndon words: An infinite word $x\in \A^\nats$ is called $\omega$-Lyndon if $x\prec y$ for every proper suffix $y$ of $x.$  Alternatively,  to define an infinite $\omega$-Lyndon word, we could have adapted the  characterisation given in \cite{BC}: An infinite word $x$ is $\omega$-Lyndon if it is non-periodic and contains an infinite number of $\omega$-Lyndon prefixes.  It turns out that the two notions are equivalent (see Lemma~\ref{L1}). 

The main purpose of this note is to extend the results in \cite{SM} on factoring infinite words as a non-increasing product of Lyndon words. We prove that every infinite word may be written uniquely as a non-increasing product of $\omega$-Lyndon words.
This provides a positive answer to a question raised in \cite{DRR}. 

\section{Main results}

\noindent Let us fix once and for all a total order $\preceq $ on $\A^\nats$ verifying the lexicographic condition (*). Note that if $x,y\in \A^\nats$ and $x\preceq y$ then for each prefix $u$ of $x$ and $v$ of $y$ with $|u|=|v|$ one has $u^\omega \preceq v^\omega$ with equality if and only if $u=v.$ We also observe that if $u^\omega \preceq v^\omega,$ and neither $u$ nor $v$ is a prefix of the other, then $u^\omega \prec v^\omega.$ In particular, if $u^\omega \preceq v^\omega$ and $u$ and $v$ are primitive, then either $u=v$ or $u^\omega \prec v^\omega.$ 
We begin with the following  lemma which is analogous to Lemma 13 in \cite{DRR}. We omit the proof as it is identical to that of Lemma 13 in \cite{DRR}.  

\begin{lemma}\label {lem11} For each $u,v \in \A^+$ and $\star\in \{=,\less, \succ\}$ the following are equivalent :
\begin{enumerate}
\item $u^\omega \star\, v^\omega;$
\item $(uv)^\omega \star\, v^\omega;$
\item $u^\omega \star\, (vu)^\omega;$
\item $(uv)^\omega \star\, (vu)^\omega;$
\end{enumerate} 
\end{lemma}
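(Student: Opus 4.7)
The plan is to address the three cases $\star\in\{=,\prec,\succ\}$ in turn; the case $\star=\,\succ\,$ is the mirror of $\star=\,\prec\,$ under swapping $u$ and $v$, so only the equality case and the case of strict $\prec$ require new work.

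For the equality case, each of (1)--(4) is equivalent to the assertion that $u$ and $v$ are both powers of a common word $w\in\A^+$. If $u=w^a$ and $v=w^b$, then $u^\omega=v^\omega=(uv)^\omega=(vu)^\omega=w^\omega$, so all four equalities hold. For each converse, the corresponding identity $s^\omega=t^\omega$ forces $s,t$ to share a common root by a standard combinatorics on words argument, and in each case this propagates to a common root for $u$ and $v$ themselves.

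For the case of strict $\prec$, set $p=|u|$ and $q=|v|$. The main tool is condition (*) combined with the identity $x^\omega=(x^k)^\omega$, which lets me reformulate each of (1)--(4) at a suitable common length. For instance, (1) at length $pq$ is equivalent to ``$u^q\alpha\prec v^p\beta$ for all $\alpha,\beta\in\A^\nats$'', and (4) at length $p+q$ is equivalent to ``$uv\cdot\alpha\prec vu\cdot\beta$ for all $\alpha,\beta$'', with analogous reformulations for (2) and (3) at lengths $q(p+q)$ and $p(p+q)$. I would then establish the cycle $(1)\Rightarrow(4)\Rightarrow(2)\Rightarrow(1)$ together with $(1)\Leftrightarrow(3)$ by specializing the tails $\alpha,\beta$ in each reformulation and repeatedly using the identities $(uv)^\omega=u\cdot(vu)^\omega$ and $(vu)^\omega=v\cdot(uv)^\omega$ to match up the target infinite words on the two sides.

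The main obstacle is the step $(1)\Rightarrow(4)$ when $p\neq q$: the natural specialization in the reformulation of (1) does not directly yield the target infinite words, owing to the mismatch between the periods $pq$ and $p+q$. In the base case $p=q$, the implication is immediate: setting $\alpha=(vu)^\omega$ and $\beta=(uv)^\omega$ in the reformulation of (1) gives $u\cdot(vu)^\omega\prec v\cdot(uv)^\omega$, which is exactly $(uv)^\omega\prec(vu)^\omega$. For $p\neq q$ I would distinguish whether one of $u,v$ is a prefix of the other. If not, $u^\omega$ and $v^\omega$ differ at some position $k<\min(p,q)$, and the length-$(k+1)$ prefixes of $(uv)^\omega$ and $(vu)^\omega$ coincide with those of $u^\omega$ and $v^\omega$, so applying (*) at length $k+1$ to the hypothesis yields $(uv)^\omega\prec(vu)^\omega$ directly. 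The remaining case, where one of $u,v$ is a prefix of the other, I would handle by induction on $|u|+|v|$, using the monotonicity of prefixing (a consequence of (*) together with the total order on $\A^\nats$) to strip common prefixes and reduce to a strictly smaller pair, to which the inductive hypothesis applies.
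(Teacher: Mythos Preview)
The paper itself omits the proof of this lemma, stating that it is identical to that of Lemma~13 in \cite{DRR}; so there is no in-paper argument to compare against line by line. Your overall strategy---settling the equality case via common roots, reducing $\succ$ to $\prec$ by the $(u,v)\leftrightarrow(v,u)$ symmetry, and then splitting $(1)\Rightarrow(4)$ into the subcases $|u|=|v|$, ``neither is a prefix of the other'', and ``one is a prefix of the other''---is reasonable, and the first two subcases are handled correctly.

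The gap is in the remaining subcase. You propose to ``strip common prefixes and reduce to a strictly smaller pair'', justified by a \emph{monotonicity of prefixing} which you assert is a consequence of~(*). It is not. Take the alternating order of Remark~\ref{usual} over $\{a,b\}$ with $a<b$: there $a^\omega\prec b^\omega$ (they first differ at the odd position~$1$), yet for any letter $c$ one has $cb^\omega\prec ca^\omega$ (they first differ at the even position~$2$). Thus left-multiplication by a fixed word is neither order-preserving nor order-reflecting under~(*), and the stripping step in your induction on $|u|+|v|$ is unjustified. Concretely, when $v=uz$ you would pass from $u^\omega\prec(uz)^\omega$ to a comparison for the smaller pair $(u,z)$ by cancelling the leading $u$; the example above shows exactly this cancellation can reverse the order.

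The argument that does work (and is essentially what \cite{DRR} does) avoids monotonicity altogether and instead controls the first differing position. If $k$ is the least index with $u^\omega_k\neq v^\omega_k$, then Fine--Wilf gives $k<|u|+|v|$, and a direct check using only $u^\omega[k-1]=v^\omega[k-1]$ shows that $(uv)^\omega[k]=u^\omega[k]$ and $(vu)^\omega[k]=v^\omega[k]$. Since these two length-$k$ words are distinct, a single application of~(*) at length~$k$ decides all four comparisons simultaneously from $(u^\omega[k])^\omega$ versus $(v^\omega[k])^\omega$, with no induction and no prefix-stripping needed.
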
 

We remark that a slightly modified version of the above lemma also applies in case one of $u$ and $v$ is infinite and the other finite: For example if $u\in \A^+$ and $v\in \A^\nats$ then $u^\omega \star v$ if and only if $uv\star v.$

Given a total order  on $\A^\nats$ it is natural to consider the notion of a Lyndon word, namely an element $x\in \A^\nats$ which is smaller (relative to the prescribed total order) than each of its proper suffixes. It will be useful to extend this notion also to finite words, however given that the order is defined only on infinite words, we shall be required to pass to infinite words by associating to each finite word $w$ its (periodic) infinite counterpart $w^\omega.$ Following \cite{DRR}:  
 
\begin{definition}\rm An infinite word $x\in \A^\nats$ is called $\omega$-Lyndon if $x\less y$ for each proper suffix $y$ of $x.$  A word $w\in \A^+$ is called $\omega$-Lyndon if $w^\omega \prec v^\omega$ for each proper suffix $v$ of $w.$  We let $\Lyn$ denote the set of all $\omega$-Lyndon words (finite and infinite) relative to the total order $\preceq .$
\end{definition}

\begin{remark}\label{rem2}\rm  We note that  $ \A\subseteq \Lyn.$ If $w\in \A^+$ is $\omega$-Lyndon, then $w$ is primitive and similarly if $x\in \A^\nats$ is $\omega$-Lyndon, then $x$ is not periodic.  It follows from Lemma~\ref{lem11} that $w\in \A^+$ is $\omega$-Lyndon if and only if for all factorisations $w=uv$ with $u,v\in \A^+$ we have $u^\omega \less v^\omega$ (see Theorem 14 in \cite{DRR}).  This in turn implies that  if $w\in \Lyn,$ then for each prefix $u$ of $w$ and each factor $v$ of $w$ with $|u|=|v|,$ either $u=v$ or $u^\omega \prec v^\omega.$ In fact, suppose $u\neq v$ and let $z$ be a suffix of $w$ beginning in $v.$ Then  if $w\in \A^+$ we have that $w^\omega \prec z^\omega$ and hence $u^\omega \prec v^\omega.$ If $w\in \A^\nats,$ then $w\prec z$ and hence $u^\omega \prec v^\omega.$ \end{remark}

\begin{remark}\label{FILyn}\rm As is the case for usual Lyndon words, each finite $\omega$-Lyndon word $w=w_1w_2\cdots w_n$ of length $n\geq 2$ is a prefix of some infinite $\omega$-Lyndon word $w'.$ In fact, if $w$ is unbordered we may take $w'=ww_n^\omega.$ For let $s$ be a suffix of $w.$ Then 
as $w_1\cdots w_{|s|}\neq s$ we have $ (w_1\cdots w_{|s|})^\omega \prec s^\omega$ and hence $w' \prec sw_n^\omega .$  On the other hand, if $w$ is bordered, we may take $w'=wu$ where $u$ is the longest border of $w.$ In fact, let $s$ be a suffix of $w.$ Then if $s$ is not a border of $w$ then as before we deduce that  $w' \prec su^\omega .$ While if $s$ is a prefix of $w,$ and hence also a prefix of $u,$ let $k$ be the first position in which $w^\omega$ and $s^\omega$ first differ. If $k\leq |s|$, then $(w_1\cdots w_k)^\omega \prec (s_1\cdots s_k)^\omega$ and hence $w'\prec  su^\omega.$ If $|s|<k\leq |u|$ then  $(w_1\cdots w_k)^\omega \prec (su_1\cdots u_{k-|s|})^\omega$ and so again we have $w'\prec  su^\omega.$ Finally if $k>|u|$ then writing
$s=z^a$ and $u=z^b$ for some $z\in \A^+$ and $a,b\in \nats$ we again get $w'\prec su^\omega$ as required. 
A similar proof shows that each finite $\omega$-Lyndon word $w\notin \A$ is a prefix of a longer finite $\omega$-Lyndon word.
\end{remark}

\begin{remark}\label{usual}\rm In contrast to the previous remark, many fundamental properties of usual Lyndon words no longer hold for $\omega$-Lyndon words. First of all, every primitive finite word and every non-periodic infinite word is $\omega$-Lyndon relative to some total order $\preceq$ on $\A^\nats$ verifying (*). As a consequence, a finite $\omega$-Lyndon word need not be unbordered. Or an infinite $\omega$-Lyndon word $x$ may be a product of prefixes of $x.$ Or if $u,v\in \A^+$ are $\omega$-Lyndon and $u^\omega \less v^\omega$ it need not be the case that $uv$ is $\omega$-Lyndon.  For example, let $\A=\{a,b\}$ be ordered by $a<b.$ Consider the total order $\preceq$ on $\A^\nats$ defined as follows: For distinct $x,y\in \A^\nats$  consider the smallest $n$ with $x_n\neq y_n.$ Then set 
\[x\less y\Leftrightarrow \begin{cases}\,\,x_n<y_n\,\,\mbox{if $n$ is odd}\\\,\,y_n<x_n \,\,\mbox{if $n$ is even}\end{cases}\]
Then $u=abba\in \Lyn$ and $v=b\in \Lyn$ and $u^\omega \less v^\omega,$ yet $uv\notin \Lyn.$ This example also illustrates that $\omega$-Lyndon words may be bordered. 
\end{remark}

\medskip

\noindent The following proposition is essentially Theorem 16 in \cite{DRR}. As with Lemma~\ref{lem11}, we omit the proof as it is identical to that of Theorem 16 in \cite{DRR}.  

\begin{proposition}\label{prop1}Each $w\in \A^+$ admits a unique factorisation $w=l_1l_2\cdots l_k$ with $l_i\in \Lyn$ and $l_1^\omega \succeq l_2^\omega \succeq \cdots \succeq l_k^\omega.$
\end{proposition}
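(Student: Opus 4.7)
Both existence and uniqueness of the factorisation are proved by induction on $|w|$, with Lemma~\ref{lem11} serving as the main algebraic tool. The organising idea is to characterise the last factor $l_k$ intrinsically from $w$: it should be the shortest non-empty suffix $v$ of $w$ for which $v^\omega$ is $\preceq$-minimal among the $\omega$-powers of all non-empty suffixes of $w$.

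For existence, if $w\in\Lyn$ take $k=1$; otherwise select $v$ as above. One first checks that $v\in\Lyn$: any proper suffix $s$ of $v$ is a suffix of $w$, so $v^\omega\preceq s^\omega$ by minimality; equality $v^\omega=s^\omega$ would make $s$ a non-empty suffix of $w$ strictly shorter than $v$ and also of minimum $\omega$-power, contradicting the minimality of $|v|$, so $v^\omega\prec s^\omega$ strictly and $v$ is $\omega$-Lyndon. One also notes $v\neq w$, since otherwise every proper suffix $s$ of $w$ would satisfy $w^\omega\prec s^\omega$ strictly, making $w$ itself $\omega$-Lyndon. Writing $w=uv$ with $u\in\A^+$ and applying the inductive hypothesis to $u$ produces $u=l_1\cdots l_{k-1}$ with $l_1^\omega\succeq\cdots\succeq l_{k-1}^\omega$; since $l_{k-1}v$ is a suffix of $w$, minimality gives $(l_{k-1}v)^\omega\succeq v^\omega$, and Lemma~\ref{lem11} converts this into $l_{k-1}^\omega\succeq v^\omega=l_k^\omega$, extending the non-increasing chain.

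For uniqueness it suffices, by a secondary induction on $|w|$, to prove that in any valid factorisation $w=l_1\cdots l_k$, the last factor $l_k$ coincides with the canonical suffix $v$. This reduces to the following key claim: for every non-empty suffix $s$ of $w$, $l_k^\omega\preceq s^\omega$, with equality forcing $s$ to be a power of $l_k$. The argument splits on where $s$ begins. If $s=l_{i+1}\cdots l_k$ starts at a factor boundary, reverse induction on $i$ applied to the split $s=l_{i+1}\cdot(l_{i+2}\cdots l_k)$, via the equivalences of Lemma~\ref{lem11}, yields $s^\omega\succeq l_k^\omega$ with equality only when $l_{i+1}=\cdots=l_k$ (using primitivity of each $\omega$-Lyndon factor to conclude equality of words from equality of $\omega$-powers). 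If $s=t\,l_{i+1}\cdots l_k$ with $t$ a proper non-empty suffix of $l_i$, then the $\omega$-Lyndon property of $l_i$ gives $t^\omega\succ l_i^\omega\succeq l_k^\omega$; comparing $s^\omega$ with $(l_{i+1}\cdots l_k)^\omega$ or with $t^\omega$ (whichever yields a strict inequality in the correct direction) via Lemma~\ref{lem11} delivers $s^\omega\succ l_k^\omega$ strictly. Finally, primitivity of $l_k$ forces $|s|\geq|l_k|$ whenever $s^\omega=l_k^\omega$, so $l_k$ is indeed the canonical shortest $\omega$-minimal suffix of $w$, hence determined by $w$ alone; uniqueness of $l_1,\dots,l_{k-1}$ then follows by induction applied to $l_1\cdots l_{k-1}$.

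The main obstacle is that $(RQ)^\omega$ is not, in general, sandwiched between $R^\omega$ and $Q^\omega$; to control it against $l_k^\omega$ one must, in each sub-case, pick the correct one of $R$ or $Q$ to compare with and invoke the corresponding equivalence in Lemma~\ref{lem11}. This bookkeeping is where care is needed, both in the reverse induction on factor-aligned suffixes (splitting into $R^\omega\succeq Q^\omega$ and $R^\omega\prec Q^\omega$) and in the strict-inequality argument for suffixes cutting inside a factor.
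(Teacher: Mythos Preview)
Your proposal is correct. The paper itself omits the proof of this proposition entirely, stating only that it ``is essentially Theorem~16 in \cite{DRR}'' and that ``the proof is identical to that of Theorem~16 in \cite{DRR}.'' Hence there is nothing in the present paper to compare your argument against; your plan supplies a self-contained proof where the authors provide none. The overall strategy --- characterising $l_k$ intrinsically as the shortest suffix whose $\omega$-power is $\preceq$-minimal, and then inducting on $|w|$ --- is the standard one for Lyndon-type factorisations, and the use of Lemma~\ref{lem11} to propagate the non-increasing condition is exactly the tool the paper relies on elsewhere.

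One small comment: your treatment of suffixes that cut inside a factor (your Case~2) is a bit loose as written. The phrase ``comparing $s^\omega$ with $(l_{i+1}\cdots l_k)^\omega$ or with $t^\omega$'' is not quite right, since Lemma~\ref{lem11} does not directly compare $(tQ)^\omega$ with $t^\omega$. The clean way through is to iterate exactly as in the factor-aligned case: from $t^\omega\succ l_i^\omega\succeq l_{i+1}^\omega$ one gets $(tl_{i+1})^\omega\succ l_{i+1}^\omega\succeq l_{i+2}^\omega$, then $(tl_{i+1}l_{i+2})^\omega\succ l_{i+2}^\omega$, and so on down to $s^\omega=(tl_{i+1}\cdots l_k)^\omega\succ l_k^\omega$. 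This is precisely the ``iteration of Lemma~\ref{lem11}'' device the paper invokes repeatedly (e.g.\ in the proofs of Lemma~\ref{L1}, Lemma~\ref{un1}, and Corollary~\ref{finfact}), so your plan is sound once this step is written out.
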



\begin{definition}For $x\in \A^\nats$ we say $x$ admits an infinite $\omega$-Lyndon factorisation  if $x=\prod_{i=1}^\infty l_i$ with each $l_i\in \Lyn\cap \A^+$ and $l_1^\omega \succeq l_2^\omega \succeq l_3^\omega \succeq \cdots.$ We say $x$ admits a finite  $\omega$-Lyndon factorisation if $x=l_1l_2\ldots l_k$ with  $l_i\in \Lyn$ and $l_1^\omega \succeq l_2^\omega \succeq \cdots \succeq  l_{k-1}^\omega\ \succ l_k.$\end{definition} 

\begin{remark}\rm Because of the fact that if $u$ and $v$ are Lyndon in the usual sense and $u<v$ then $uv$ is Lyndon, it follows that the factorisation of a finite word $w$ as a non-increasing product of Lyndon words is also the shortest factorisation of $w$ as a product of Lyndon words. This is no longer true in general for $\omega$-Lyndon words. For example, relative to the total order $\preceq$  defined in Remark~\ref{usual}, the word $w=ababab$ is the product of $ababa$ and $b,$ both of which are $\omega$-Lyndon, yet the $\omega$-Lyndon factorisation of $w$ has length three and is given by $w=(ab)(ab)(ab).$  
\end{remark}

\noindent The following lemma constitutes a generalisation of a characterisation of infinite Lyndon words given in \cite{BC}:

\begin{lemma}\label{L1} Let $x\in \A^\nats .$ Then $x\notin \Lyn$ if and only if either $x=l^\omega$ for some $l\in \Lyn $ or only a finite number of prefixes of $x$ are members of $\Lyn.$
\end{lemma}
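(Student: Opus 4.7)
The plan is to prove the biconditional in both directions using Proposition~\ref{prop1}, the remark after Lemma~\ref{lem11} (extending it to one infinite argument), and the first-difference characterisation of $\preceq$ obtained from (*): if $x,y \in \A^\nats$ first disagree at position $m$, then $x \prec y$ if and only if $(x[1..m])^\omega \prec (y[1..m])^\omega$.

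For the $(\Leftarrow)$ direction, the case $x = l^\omega$ with $l \in \Lyn$ is immediate from Remark~\ref{rem2}, since $x$ is then periodic. When instead only finitely many $\omega$-Lyndon prefixes of $x$ exist, I assume $x \in \Lyn$ for contradiction. For each $n$, Proposition~\ref{prop1} gives $x[1..n] = l_1(n) \cdots l_{k(n)}(n)$ whose first factor is an $\omega$-Lyndon prefix of $x$ and hence lies in a finite set; pigeonhole yields $l_1$ with $l_1(n) = l_1$ for infinitely many $n$. Iterating the pigeonhole on successive factors, I either reach a stage $j$ at which some $l_j(n)$ has unbounded length along a subsequence, or extract an infinite $\omega$-Lyndon factorisation $x = l_1 l_2 l_3 \cdots$. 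In the first case, using $l_{j-1}^\omega \succeq l_j(n)^\omega$ together with an initial-segment comparison of $l_j(n)^\omega$ and $l_{j-1}^\omega$ against the tail $y^{(j-1)} := x[|l_1 \cdots l_{j-1}|+1..]$ forces $y^{(j-1)} \preceq l_{j-1}^\omega$; cascading this up the factorisation through the non-increasing $\omega$-order gives $y := l_2 l_3 \cdots \preceq l_1^\omega$. In the infinite case the key sub-claim $l_2 l_3 \cdots \preceq l_2^\omega$, proved by block-by-block induction, combined with $l_2^\omega \preceq l_1^\omega$ yields the same conclusion. Applying the remark after Lemma~\ref{lem11} with $u = l_1$ and $v = y$ then gives $l_1^\omega \succeq y \Leftrightarrow l_1 y \succeq y$, hence $x = l_1 y \succeq y$, contradicting $x \prec y$; the borderline $y = l_1^\omega$ would make $x = l_1^\omega$ periodic, already ruled out by Remark~\ref{rem2}.

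For the $(\Rightarrow)$ direction I argue the contrapositive: assuming infinitely many prefixes of $x$ are $\omega$-Lyndon, I show $x \in \Lyn$. A preliminary lemma ensures that a periodic $x = w_0^\omega$ (with $w_0$ primitive) has only finitely many $\omega$-Lyndon prefixes: Lemma~\ref{lem11} reduces the $\omega$-Lyndon condition on a candidate $u = w_0^i p$ to inequalities between $w_0^\omega$ and $\omega$-powers of suffixes, and comparing $u^\omega$ with $(w' w_0^{i-1} p)^\omega$ for a fixed proper suffix $w'$ of $w_0$ with $w_0^\omega \succeq (w')^\omega$ shows that for $i$ large the first-difference position is inherited from that of $w_0^\omega$ versus $(w')^\omega$, bounding $|u|$ uniformly. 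Hence our hypothesis forces $x$ non-periodic. For any proper suffix $y = x[k+1..]$ the first-difference position $m^*$ exists; choosing $n > k + m^*$ with $x[1..n] \in \Lyn$, the proper suffix $v := x[k+1..n]$ of $u := x[1..n]$ satisfies $u^\omega \prec v^\omega$, and since $u^\omega, v^\omega$ begin with $x[1..m^*], y[1..m^*]$ respectively, the first-difference characterisation from (*) gives $(x[1..m^*])^\omega \prec (y[1..m^*])^\omega$, hence $x \prec y$, i.e.\ $x \in \Lyn$.

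The step I expect to be most delicate is the block-by-block sub-claim $l_2 l_3 \cdots \preceq l_2^\omega$ in the infinite-factorisation case of $(\Leftarrow)$. The induction handles the typical case when some $l_k \neq l_2$ by using primitivity to turn $l_k^\omega \preceq l_2^\omega$ into a strict inequality whose first-difference position against $l_2^\omega$ then witnesses $y \prec l_2^\omega$ via (*); the awkward degenerate case is when every $l_k$ equals $l_2$, so $y = l_2^\omega$, which still produces a contradiction with $x \in \Lyn$ either by periodicity when $l_1 = l_2$ or by reading the first-difference position of $l_1^\omega$ and $l_2^\omega$ inside $y$. The preliminary lemma on periodic words has its own bookkeeping of first-difference inheritance for $u^\omega$ versus $(w' w_0^{i-1} p)^\omega$, but rests on the same initial-segment framework.
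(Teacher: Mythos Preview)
There is a genuine gap in your $(\Rightarrow)$ direction. You state the contrapositive as ``infinitely many $\omega$-Lyndon prefixes $\Rightarrow x\in\Lyn$'', but the correct contrapositive must also assume that $x\neq l^\omega$ for every $l\in\Lyn$. Without that extra hypothesis your preliminary lemma (``a periodic $x=w_0^\omega$ with $w_0$ primitive has only finitely many $\omega$-Lyndon prefixes'') is simply false. Take the alternating order of Remark~\ref{usual}; then $ab\in\Lyn$ and one checks that $(ab)^ia\in\Lyn$ for every $i\geq 0$, so $x=(ab)^\omega$ is periodic with infinitely many $\omega$-Lyndon prefixes and yet $x\notin\Lyn$. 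Your argument for the preliminary lemma tacitly assumes the existence of a proper suffix $w'$ of $w_0$ with $(w')^\omega\preceq w_0^\omega$, which is exactly the condition $w_0\notin\Lyn$; when $w_0\in\Lyn$ no such $w'$ exists and the argument collapses. The fix is immediate: carry the full hypothesis $x\neq l^\omega$ of the contrapositive, which forces $w_0\notin\Lyn$ and makes your first-difference comparison with $(w'w_0^{i-1}p)^\omega$ go through.

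By contrast, the paper dispatches this direction directly in a few lines: starting from a suffix $y\preceq x$, if $y\prec x$ one finds equal-length prefixes $u$ of $x$ and $v$ of $y$ with $v^\omega\prec u^\omega$, and then Remark~\ref{rem2} kills every prefix of $x$ long enough to contain $v$ as a factor; if $y=x$ then $x=u^\omega$ with $u$ primitive, and either $u\in\Lyn$ (first disjunct) or some rotation $u_2u_1$ satisfies $(u_2u_1)^\omega\prec u^\omega$, which via Remark~\ref{rem2} again bounds the Lyndon prefixes. For the converse, both you and the paper pass through the $\omega$-Lyndon factorisations of the prefixes $x[n]$, but the paper's case split on whether the maximal block length $l(n)$ is bounded leads more quickly to an explicit suffix $y\prec x$, whereas your iterated pigeonhole plus the cascading inequality $l_2l_3\cdots\preceq l_2^\omega$ is correct in spirit but needs more care than your sketch gives it (the first-difference position of $l_k^\omega$ versus $l_2^\omega$ can exceed $|l_k|$, so one should compare $(l_2\cdots l_K)^\omega$ with $l_2^\omega$ for $K$ large rather than $l_k^\omega$ alone).
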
 

\begin{proof} Assume $x\notin \Lyn$ and pick a  proper suffix $y$ of $x$ with $y\preceq x.$ If $y=x,$ then $x=u^\omega$ for some primitive word $u\in \A^+.$ 
As $u$ is primitive,  for each nontrivial factorisation $u=u_1u_2$ one has $u_2u_1\neq u$ and hence $(u_2u_1)^\omega \neq u^\omega.$  If $u\in \Lyn$ we are done. If $u\notin \Lyn,$  pick a factorisation $u=u_1u_2$ with $(u_2u_1)^\omega \prec u^\omega.$ Then by Remark~\ref{rem2}, if $v$ is a prefix of $x$ with $|v|\geq 2|u|$ then $v\notin \Lyn.$  If $y\prec x,$ pick a prefix $v$ of $y$ and a prefix $u$ of $x$ with $|u|=|v|$ and $v^\omega\prec u^\omega.$ Then again by Remark~\ref{rem2} any prefix of $x$ containing $v$ as a factor cannot belong to $\Lyn.$ 

For the converse, we note that if $x$ is periodic then $x\notin \Lyn.$ So assume $x$ is not periodic and only a finite number of prefixes of $x$ belong to $\Lyn.$
For $n\in \nats, $  let $x[n]$ denote the prefix of $x$ of length $n,$ and let $l(n)$ denote the length of a longest $\omega$-Lyndon word occurring in the $\omega$-Lyndon factorisation of $x[n]$ (see Proposition~\ref{prop1}). If $(l(n))_{n\geq 1}$ is unbounded, then pick $n$ such that i) $l(n)$ is greater than the length of the longest
$\omega$-Lyndon prefix of $x$ and ii) $l(n)$ is the length of the last $\omega$-Lyndon word in the $\omega$-Lyndon factorisation of $x[n].$
 Then  $x[n]=l_1l_2\cdots l_k$ with $l_i\in \Lyn$ and $l_1^\omega \succeq l_2^\omega \succeq \cdots \succeq  l_{k}^\omega,$ and $l_k$ is not a prefix of $x.$ By iteration of Lemma~\ref{lem11},  $(l_1l_2\cdots l_k)^\omega \succeq l_k^\omega$ and hence $(l_1l_2\cdots l_k)^\omega \succ l_k^\omega.$  Writing $x=l_1l_2\cdots l_{k-1}y$ with $y\in \A^\nats,$ since $l_k$ is a prefix of $y$ but not of  $l_1l_2\cdots l_k$ we have $x\succ y$ and hence $x\notin \Lyn.$  If $(l(n))_{n\geq 1}$ is bounded
then pick a finite set $F\subseteq \Lyn$ such that all $\omega$-Lyndon words occurring in the $\omega$-Lyndon factorisation of $x[n]$ for $n\in \nats$ belong to $F.$ Because of the non increasing order condition in the  $\omega$-Lyndon factorisation, there exist $l_1,l_2,\ldots ,l_k$ in $F$ with $l_1^\omega \succeq l_2^\omega \succeq \cdots \succeq l_k^\omega$ such that  $l_1l_2\cdots l_{k-1}l_k^m$ is a prefix of $x$ for every $m\in \nats.$ Pick $m$ such that $l_k^m$ is not a prefix of $x$ and write $x=l_1l_2\cdots l_{k-1}y$ with $y\in \A^\nats.$ Then as $l_k^m$ is a prefix of $y$ but not of $l_1l_2\cdots l_k^m$ and $(l_1l_2\cdots l_k^m)^\omega \succeq l_k^\omega$ we deduce that $x\succ y$ and hence $x\notin \Lyn.$ \end{proof}

\begin{proposition}\label{existence} Each  $x\in \A^\nats$ admits either an infinite or a finite $\omega$-Lyndon factorisation.\end{proposition}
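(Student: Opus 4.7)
The plan is to construct the factorisation of $x$ by iteratively extracting factors from the unique $\omega$-Lyndon factorisations of the finite prefixes $x[n]$, using a pigeonhole argument together with a continuity property of $\preceq$, with Lemma~\ref{L1} handling the step where extraction terminates. For each $n$ let $F_n = (l_1^{(n)}, \ldots, l_{k_n}^{(n)})$ denote the unique $\omega$-Lyndon factorisation of $x[n]$ furnished by Proposition~\ref{prop1}.

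A key auxiliary observation that I would establish first is the following continuity principle for $\preceq$: if $(w_n)_n \subset \A^\nats$ converges pointwise to $w$ and $w_n \preceq z$ for every $n$, then $w \preceq z$. For otherwise, at the first position $p$ where $w$ and $z$ disagree the length-$p$ prefixes $u$ of $w$ and $v$ of $z$ would satisfy $u^\omega \succ v^\omega$ by the observation following $(*)$; yet for $n$ large enough $u$ is also the length-$p$ prefix of $w_n$, so $w_n \preceq z$ would force $u^\omega \preceq v^\omega$, a contradiction.

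Factor extraction is then iterative. Suppose inductively that we have an infinite $N_{i-1} \subseteq \nats$ and $\omega$-Lyndon words $l_1^*, \ldots, l_{i-1}^* \in \A^+$ with $l_1^{*\omega} \succeq \cdots \succeq l_{i-1}^{*\omega}$, such that for every $n \in N_{i-1}$ the first $i-1$ factors of $F_n$ are precisely $l_1^*, \ldots, l_{i-1}^*$. For $n \in N_{i-1}$ large enough $F_n$ has at least $i$ factors, and $l_i^{(n)}$ is a prefix of the infinite suffix $y_i \in \A^\nats$ defined by $x = l_1^* \cdots l_{i-1}^* y_i$. If $|l_i^{(n)}|$ stays bounded along $N_{i-1}$, the pigeonhole principle yields an infinite $N_i \subseteq N_{i-1}$ along which $l_i^{(n)}$ takes a constant $\omega$-Lyndon value $l_i^*$, and I iterate. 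Otherwise the $l_i^{(n)}$ along a subsequence endow $y_i$ with infinitely many $\omega$-Lyndon prefixes of arbitrarily large length, and by Lemma~\ref{L1} either $y_i \in \Lyn$ or $y_i = l^\omega$ for some $l \in \Lyn$; in either case the construction terminates.

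If the procedure never terminates, the resulting sequence $(l_i^*)_{i \geq 1}$ inherits the non-increasing condition from the $F_n$, and since $|l_i^*| \geq 1$ the concatenation $l_1^* l_2^* \cdots$ equals $x$, giving an infinite $\omega$-Lyndon factorisation. If it terminates at step $i$ with $y_i \in \Lyn$, then $x = l_1^* \cdots l_{i-1}^* y_i$; continuity applied to $l_i^{(n)\omega} \to y_i$ (which holds because $l_i^{(n)}$ is a prefix of $y_i$ whose length tends to infinity) together with $l_i^{(n)\omega} \preceq l_{i-1}^{*\omega}$ from $F_n$ yields $y_i \preceq l_{i-1}^{*\omega}$, and strict inequality follows because $y_i = l_{i-1}^{*\omega}$ would force $y_i$ periodic, contradicting $y_i \in \Lyn$. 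If instead it terminates with $y_i = l^\omega$, then $x = l_1^* \cdots l_{i-1}^* \cdot l \cdot l \cdots$ is a valid infinite factorisation, continuity again delivering $l^\omega \preceq l_{i-1}^{*\omega}$. The hardest step is the continuity principle for $\preceq$, which is what lets the non-increasing condition pass to the limit and in particular supplies the strict inequality needed in the finite-factorisation case.
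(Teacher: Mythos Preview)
Your argument is correct and follows essentially the same route as the paper: iteratively extract factors from the $\omega$-Lyndon factorisations of the prefixes $x[n]$ via pigeonhole, and invoke Lemma~\ref{L1} when the lengths become unbounded to conclude that the remaining suffix is either in $\Lyn$ or purely periodic. The only cosmetic differences are that the paper phrases the proof contrapositively (assuming at the outset that no infinite factorisation exists) and handles the inequality $l_{k-1}^\omega\succ l_k$ by an inline argument (choosing $m$ with $l_{k-1}^m$ not a prefix of $l_k$ and $n$ with $|l_k^{(n)}|>|l_{k-1}^m|$) that is exactly your continuity principle specialised to this one instance.
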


\begin{proof} Let $x\in \A^\nats $ and assume $x$ does not admit an infinite $\omega$-Lyndon factorisation. We  will show that $x$ admits a finite $\omega$-Lyndon factorisation.  The result is immediate in case $x\in \Lyn$ so we may assume that $x\notin \Lyn.$ For $n\in \nats,$ let $l_i^{(n)}$ be the $i$'th $\omega$-Lyndon word occurring in the $\omega$-Lyndon factorisation of $x[n],$ where $x[n]$ is the prefix of length $n$ of $x.$ In other words $x[n]=l_1^{(n)}l_2^{(n)}\cdots l_k^{(n)}.$ We may take $l_i^{(n)}$ to be the empty word if the $\omega$-Lyndon factorisation of $x[n]$ has fewer than $i$ terms.  By Lemma~\ref{L1}, the set $L_1=\{l_1^{(n)}\,:\, n\in \nats\}$ is finite and hence there exist $l_1\in \Lyn$ and an infinite set $A_1\subseteq \nats$ such that  for each $n\in A_1$ the $\omega$-Lyndon factorisation of $x[n]$ begins in $l_1.$  Put $L_2= \{l_2^{(n)}\,:\, n\in A_1\}.$  If $L_2$ is finite, then we may pick  $l_2 \in \Lyn$ and an infinite subset $A_2\subseteq A_1$ such that for each $n\in A_2$ the $\omega$-Lyndon factorisation of $x[n]$ begins in $l_1l_2$ and put $L_3= \{l_3^{(n)}\,:\, n\in A_2\}.$ Continuing as above, if each $L_k$ is finite then $x$ would admit an infinite $\omega$-Lyndon factorisation contrary to our assumption. And hence, there exists $k\geq 2$  and $l_1,l_2,\ldots ,l_{k-1}\in \Lyn$ and an infinite set $A_{k-1}\subseteq \nats$ such that for each $n\in A_{k-1}$ the $\omega$-Lyndon factorisation of $x[n]$ begins in $l_1l_2\cdots l_{k-1}$ and $L_k= \{l_k^{(n)}\,:\, n\in A_{k-1}\}$ infinite. Define $l_k\in \A^\nats$ by $x= l_1l_2\ldots l_{k-1}l_k.$ We claim $l_k\in \Lyn$ and $l_{k-1}^\omega \succ l_k.$  Observe that $l_{k}\neq l_{k-1}^\omega$ for otherwise $x=l_1\cdots l_{k-2}l_{k-1}^\omega$ is an infinite $\omega$-Lyndon factorisation of $x.$ Pick $m\in \nats$ such that $l_{k-1}^m$ is not a prefix of $l_k$ and $n\in A_{k-1}$ such that $|l_{k-1}^m| <|l_k^{(n)}|.$  Since $l_{k-1}^\omega \succeq (l_k^{(n)})^\omega$ and $l_{k-1}^m$ is  not a prefix of $l_k^{(n)},$ it follows that  $l_{k-1}^\omega \succ l_k.$ It remains to show that $l_k\in \Lyn.$ By Lemma~\ref{L1}, if $l_k\notin \Lyn$ then
$l_k=u^\omega$ for some $u\in \Lyn$ and hence $x=l_1l_2\cdots l_{k-1}u^\omega$ is an infinite $\omega$-Lyndon factorisation, a contradiction. 
\end{proof} 

\noindent We now turn to the question of unicity of $\omega$-Lyndon factorisations for infinite words. We begin by establishing unicity for words admitting a finite $\omega$-Lyndon factorisation.

\begin{lemma}\label{un1} Let $x\in \A^\nats$ and $u_1u_2\cdots u_k$ be a prefix of $x$  such that  $u_1^\omega \succeq u_2^\omega \succeq  \cdots \succeq u_k^\omega.$  If $x\in \Lyn$  then each $u_i$ is a prefix of $x.$ 
\end{lemma}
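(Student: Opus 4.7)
My plan is to prove the lemma by induction on $k$. The base case $k=1$ is immediate since $u_1 = u_1 u_2\cdots u_k$ is itself a prefix of $x$.

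For the inductive step with $k\geq 2$, I would apply the inductive hypothesis to the shorter prefix $u_1 u_2\cdots u_{k-1}$ of $x$ (which still satisfies the non-increasing condition) to conclude that each of $u_1,\ldots,u_{k-1}$ is a prefix of $x$. The remaining task is to show that $u_k$ is also a prefix of $x$.

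For this I argue by contradiction. Let $p$ denote the prefix of $x$ of length $|u_k|$, and suppose $p\neq u_k$. Since $u_k$ occurs as a factor of $x$ of length $|u_k|$ (at position $|u_1\cdots u_{k-1}|+1$) and Remark~\ref{rem2} is stated and proved for both finite and infinite $w\in\Lyn$, applying it to $x$ yields $p^\omega\prec u_k^\omega$. Combining with the hypothesis $u_k^\omega\preceq u_{k-1}^\omega$, I would then aim to force an inconsistency by examining $u_{k-1}u_k$ as a factor of $x$ at position $|u_1\cdots u_{k-2}|+1$ and comparing its $\omega$-power against that of the prefix of $x$ of the same length $|u_{k-1}|+|u_k|$, again via Remark~\ref{rem2}. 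The key auxiliary fact is that $(u_{k-1}u_k)^\omega\succeq u_k^\omega$, which follows from statement (2) of Lemma~\ref{lem11} applied to $u_{k-1}^\omega\succeq u_k^\omega$.

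The main obstacle is the case analysis in this final step, which splits naturally according to whether $|u_k|\leq |u_{k-1}|$ or $|u_k|>|u_{k-1}|$. In the first case, $p$ is a prefix of $u_{k-1}$, so one can write $u_{k-1}=p\cdot s$ for some $s\in\A^+$; examining the factor of $x$ of length $|u_{k-1}|$ at position $|u_1\cdots u_{k-1}|+1$ (which equals $u_k\cdot t'$ for some tail segment $t'$ of length $|s|$) together with Remark~\ref{rem2} yields a conjugation-type relation among $p$, $s$, and $u_k$, and applying statement (4) of Lemma~\ref{lem11} as in the argument sketched in Remark~\ref{FILyn} produces the contradiction. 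In the second case, $p$ begins with $u_{k-1}$ (say $p=u_{k-1}\cdot p'$), and the strict inequality $p^\omega\prec u_k^\omega$ combined with $u_k^\omega\preceq u_{k-1}^\omega$ gives $(u_{k-1}p')^\omega\prec u_{k-1}^\omega$, from which a further application of Lemma~\ref{lem11} produces a periodicity constraint on $u_k$ incompatible with $p\neq u_k$.
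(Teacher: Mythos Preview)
Your proposal has a genuine gap. After correctly deriving $p^\omega\prec u_k^\omega$ from Remark~\ref{rem2}, the case analysis you sketch does not actually close. In Case~2, the inequality $(u_{k-1}p')^\omega\prec u_{k-1}^\omega$ that you obtain is a relation between $p$ and $u_{k-1}$ only; it says nothing about $u_k$, so it cannot by itself produce ``a periodicity constraint on $u_k$ incompatible with $p\neq u_k$.'' In Case~1, the appeal to ``statement~(4) of Lemma~\ref{lem11} as in the argument sketched in Remark~\ref{FILyn}'' is not substantiated: you have $(ps)^\omega\prec (u_kt')^\omega$ and $p^\omega\prec u_k^\omega$, but no conjugation relation connecting $p,s,u_k,t'$ has been established, and Remark~\ref{FILyn} treats a different situation (extending a finite $\omega$-Lyndon word). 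In short, neither case is brought to an actual contradiction.

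The paper's proof avoids the whole case analysis with a single observation you are missing: by iterating Lemma~\ref{lem11} over the entire chain (not just the last two terms) one obtains
\[
(u_1\cdots u_i)^\omega\ \succeq\ u_i^\omega\qquad(1\le i\le k).
\]
Since $u_1\cdots u_i$ is a prefix of $x$, the prefix $v_i$ of $x$ of length $|u_i|$ is also the length-$|u_i|$ prefix of $(u_1\cdots u_i)^\omega$, whence $v_i^\omega\succeq u_i^\omega$ by the prefix comparison noted at the start of Section~2. If $v_i\neq u_i$, Remark~\ref{rem2} gives $v_i^\omega\prec u_i^\omega$, a contradiction. No induction and no case split are needed; the missing idea is precisely to aggregate all of $u_1,\ldots,u_i$ via Lemma~\ref{lem11} rather than working only with $u_{k-1}$ and $u_k$.
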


\begin{proof} By iteration of Lemma~\ref{lem11},  for  $1\leq i\leq k$ we have that $(u_1\cdots u_i)^\omega \succeq u_i^\omega.$ Let $v_i$ denote the prefix of $x$ of length $|u_i|.$ If $v_i\neq u_i$ then $v_i^\omega \succ u_i^\omega$ contradicting that $x\in \Lyn.$
\end{proof}

\begin{lemma}\label{un2} Let $x\in \A^\nats$ and $k\geq 3.$ Assume $u_1u_2\cdots u_k$ is a prefix of $x$ such that  $u_1^\omega \succeq u_2^\omega \succeq  \cdots \succeq u_k^\omega.$ If $x\in \Lyn,$ then either $|u_1\cdots u_{k-2}|\leq |u_k|$ or $u_1\cdots u_{k-2}u_k$ is a prefix of $x.$
\end{lemma}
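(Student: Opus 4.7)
I would begin by invoking Lemma~\ref{un1} on the prefix $u_1\cdots u_k$ of $x$ to obtain that each of the $u_i$, in particular $u_{k-1}$ and $u_k$, is a prefix of $x$. Since any two prefixes of an infinite word are comparable by the prefix relation, one of $u_{k-1}$ and $u_k$ is a prefix of the other, and I would split the argument into two cases according to whether $|u_{k-1}|\geq|u_k|$ or $|u_{k-1}|<|u_k|$.

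In the case $|u_{k-1}|\geq|u_k|$, the word $u_k$ is a prefix of $u_{k-1}$. Since $u_1\cdots u_{k-1}$ is itself a prefix of $x$, the factor of $x$ of length $|u_k|$ starting at position $|u_1\cdots u_{k-2}|+1$ equals the length-$|u_k|$ prefix of $u_{k-1}$, which is exactly $u_k$. Hence $u_1\cdots u_{k-2}u_k$ is a prefix of $x$, unconditionally in this case, so the second alternative of the lemma holds.

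In the case $|u_{k-1}|<|u_k|$, the word $u_{k-1}$ is a proper prefix of $u_k$; I would write $u_k=u_{k-1}\gamma$ with $\gamma\in\A^+$. Here the hypothesis $|u_1\cdots u_{k-2}|>|u_k|$ becomes essential, and I would argue by contradiction. Let $v$ be the length-$|u_k|$ factor of $x$ starting at position $|u_1\cdots u_{k-2}|+1$. If $v\neq u_k$, Remark~\ref{rem2} applied to the prefix $u_k$ and the factor $v$ gives $u_k^\omega\prec v^\omega$. Reading $v$ off the prefix $u_1\cdots u_{k-1}u_k$ of $x$ and using $|u_{k-1}|<|u_k|$, one finds $v=u_{k-1}\delta$ where $\delta$ is the length-$|\gamma|$ prefix of $u_k$. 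Applying Remark~\ref{rem2} a second time to the length-$|\gamma|$ prefix $\delta$ of $x$ (coming from $u_k$ being a prefix of $x$) and the length-$|\gamma|$ factor $\gamma$ of $x$ occurring at position $|u_{k-1}|+1$ (again because $u_k$ is a prefix of $x$), one concludes either $\delta=\gamma$---which would force $v=u_k$, a contradiction---or $\delta^\omega\prec\gamma^\omega$. One is thus left with the inequalities $(u_{k-1}\gamma)^\omega\prec(u_{k-1}\delta)^\omega$ and $\delta^\omega\prec\gamma^\omega$, which I would combine with Lemma~\ref{lem11} and the lex-like condition~(*) to contradict the standing hypothesis $u_{k-1}^\omega\succeq u_k^\omega=(u_{k-1}\gamma)^\omega$.

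The main obstacle is this second case, where $u_{k-1}$ is a proper prefix of $u_k$. The delicate point is that the two inequalities produced by Remark~\ref{rem2} point in opposite directions---$\delta^\omega\prec\gamma^\omega$ while $(u_{k-1}\delta)^\omega\succ(u_{k-1}\gamma)^\omega$---a sign reversal that is permitted by the lex-like orders considered here (cf.\ the examples of Remark~\ref{usual}). Producing the contradiction with $u_{k-1}^\omega\succeq u_k^\omega$ therefore requires a careful application of Lemma~\ref{lem11}, converting $\omega$-powers of concatenations into $\omega$-powers of their constituents, rather than naively "cancelling" the common prefix $u_{k-1}$.
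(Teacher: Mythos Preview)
Your Case~1 ($|u_{k-1}|\geq|u_k|$) is correct and clean. The difficulty is entirely in Case~2, and there the plan has a genuine gap. You arrive at the three relations
\[
(\mathrm{A})\ (u_{k-1}\gamma)^\omega \prec (u_{k-1}\delta)^\omega,\qquad
(\mathrm{B})\ \delta^\omega \prec \gamma^\omega,\qquad
(\mathrm{C})\ \gamma^\omega \preceq u_{k-1}^\omega,
\]
with $\delta$ the length-$|\gamma|$ prefix of $u_{k-1}\gamma$, and then assert that Lemma~\ref{lem11} together with condition~(*) yields a contradiction. But Lemma~\ref{lem11} only relates $u^\omega$, $v^\omega$, $(uv)^\omega$, $(vu)^\omega$; it gives you no direct handle on a comparison like $(u_{k-1}\gamma)^\omega$ versus $(u_{k-1}\delta)^\omega$ where the left factor is the same. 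One can grind out a contradiction, but only by repeatedly passing to equal-length prefixes and using Remark~\ref{rem2} again and again (for instance, first forcing $\gamma$ to begin with $u_{k-1}$, then iterating on the shorter tail), which amounts to an induction on $|\gamma|$. None of this is indicated in your outline, and your own closing paragraph correctly identifies exactly this spot as the obstacle without resolving it. Notice also that the assumption $|u_1\cdots u_{k-2}|>|u_k|$, which you flag as ``essential'' in Case~2, is never actually invoked.

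The paper's proof avoids the whole difficulty by working one level up. Assuming $|u_1\cdots u_{k-2}|>|u_k|$, it first notes that $u_k$ is a prefix of $u_1\cdots u_{k-2}$ (both being prefixes of $x$), so that $u_{k-1}u_k$ is a prefix of $u_{k-1}u_1\cdots u_{k-2}$. Then a single application of Lemma~\ref{lem11} gives $(u_1\cdots u_{k-2})^\omega\succeq u_{k-1}^\omega$, hence $(u_1\cdots u_{k-1})^\omega\succeq (u_{k-1}u_1\cdots u_{k-2})^\omega$. Comparing the length-$|u_{k-1}u_k|$ prefixes of these two words and using $x\in\Lyn$ forces $u_{k-1}u_k$ to be a prefix of $x$; since $u_k$ is also a prefix of $x$, the length-$|u_k|$ prefix of $u_{k-1}u_k$ is $u_k$, and therefore $u_1\cdots u_{k-2}u_k$ is a prefix of $x$. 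This handles both of your cases at once and makes essential use of the hypothesis $|u_1\cdots u_{k-2}|>|u_k|$ that your argument left idle.
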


\begin{proof}Assume $|u_1\cdots u_{k-2}|>|u_k|.$ By Lemma~\ref{un1}, we have that $u_k$ is a prefix of $x$ and hence a prefix of $u_1\cdots u_{k-2}.$   By Lemma~\ref{lem11}  we have that $(u_1\cdots u_{k-2})^\omega \succeq u_{k-2}^\omega \succeq u_{k-1}^\omega$ and hence $(u_1\cdots u_{k-2}u_{k-1})^\omega \succeq(u_{k-1}u_1\cdots u_{k-2})^\omega.$ Since $x\in \Lyn$ it follows that $u_{k-1}u_k$ is a prefix of $x$ and hence $u_k$ is a prefix of $u_{k-1}u_k.$ Thus $u_1\cdots u_{k-2}u_k$ is a prefix of $x.$
\end{proof}

\begin{lemma}\label{un3} Let $(u_i)_{i\in \nats}$ be a sequence in $\A^+$ with $u_1^\omega \succ u_2^\omega \succ  \cdots .$ Then $\prod_{i=1}^\infty u_i\notin \Lyn. $ 
\end{lemma}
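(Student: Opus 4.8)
We have a sequence $(u_i)$ in $\mathcal{A}^+$ with strictly decreasing $\omega$-powers: $u_1^\omega \succ u_2^\omega \succ \cdots$. We want to show $x := \prod_{i=1}^\infty u_i \notin \mathscr{L}_\omega$.

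**Key tool: Lemma L1.** To show $x \notin \mathscr{L}_\omega$, by Lemma L1 it suffices to show one of:
1. $x = l^\omega$ for some $l \in \mathscr{L}_\omega$, OR
2. only finitely many prefixes of $x$ are in $\mathscr{L}_\omega$.

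Alternatively, I could try to directly find a proper suffix $y$ of $x$ with $y \preceq x$.

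**The direct approach:** The natural suffix to consider is $y = \prod_{i=2}^\infty u_i = u_2 u_3 \cdots$. This is a proper suffix of $x$. I want to show $y \preceq x$ (actually I want $x \not\prec y$, i.e., NOT a Lyndon situation).

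Let me think. We have $x = u_1 u_2 u_3 \cdots$ and the suffix $u_2 u_3 \cdots$.

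Intuitively: $u_1^\omega \succ u_2^\omega$ means $u_1$ is "large" and $u_2$ is "smaller". So starting with $u_1$ should make $x$ large, while the suffix starting with $u_2$ is smaller. So $y \prec x$ plausibly, giving a suffix smaller than $x$, so $x$ is not Lyndon.

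But I need to be careful—comparing $x = u_1 y'$ (where $y' = u_2 u_3 \cdots$) with $y = u_2 u_3 \cdots$. Wait, the suffix is $y = u_2 u_3 \cdots$, and I'm comparing $x$ with this $y$. So I compare $u_1 u_2 u_3 \cdots$ with $u_2 u_3 \cdots$.

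Let me set $z = u_2 u_3 \cdots$ (an infinite word). Then $x = u_1 z$ and the suffix $y = z$. I want $z \preceq u_1 z$, i.e., $z \preceq x$. Then $x$ has a suffix $\preceq$ itself, so $x \notin \mathscr{L}_\omega$.

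**Comparing $u_1 z$ and $z$:** By the remark after Lemma 11, $u_1^\omega \star z \iff u_1 z \star z$ for $\star \in \{=, \prec, \succ\}$.

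Now I need $u_1^\omega$ vs $z$. We have $u_1^\omega \succ u_2^\omega \succeq$ ... and $z = u_2 u_3 \cdots$ starts with $u_2$.

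Claim: $z \preceq u_1^\omega$. Since $u_1^\omega \succ u_2^\omega$ and $z$ starts with $u_2$... I need that $z \prec u_1^\omega$ or $z = u_1^\omega$.

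Consider prefixes. Take a prefix $u_1^N$ of $u_1^\omega$ of length $|u_1| \cdot N$, and the prefix of $z$ of the same length. Since $u_1^\omega \succ u_2^\omega$ and $z$ "looks like" something built from $u_2, u_3, \dots$ all with $u_i^\omega \preceq u_2^\omega \prec u_1^\omega$...

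Let me argue $z \prec u_1^\omega$ (assuming they differ, handle equality case). Suppose for contradiction $u_1^\omega \preceq z$. Since $u_1^\omega \succ u_2^\omega$ and $z$ begins with $u_2$, compare prefixes of length $|u_1||u_2|$ (common multiple). The prefix of $u_1^\omega$ is $(u_1^{|u_2|})$ and the prefix of $z$ starts with $u_2^{|u_1|}$... Using (*): $u_2^\omega \prec u_1^\omega$ implies comparing length-$|u_2|$ prefixes $u_2$ of $z$ with length-$|u_2|$ prefix of $u_1^\omega$. By the opening observation in Section 2, $u_1^\omega \succ u_2^\omega$ with both primitive-ish... this forces the first difference to favor $u_1^\omega$. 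I need to nail down that $z \preceq u_1^\omega$ robustly.

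**This is the main obstacle:** showing $z = u_2 u_3 \cdots \preceq u_1^\omega$ cleanly, since $z$ is an infinite word that's a concatenation of pieces each with $\omega$-power $\prec u_1^\omega$. The concern is whether (*) transfers from $u_i^\omega \prec u_1^\omega$ to the full concatenation. I'd likely show: any finite prefix $z[n]$ of $z$ satisfies $z[n] \preceq$ (corresponding prefix of $u_1^\omega$) via (*) applied to the first block where they could differ, then pass to the limit. Once $z \preceq u_1^\omega$, the remark gives $z \preceq u_1 z = x$, so $x$ has a proper suffix $\preceq x$, hence $x \notin \mathscr{L}_\omega$.
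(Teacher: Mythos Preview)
Your approach has a genuine gap: the central claim that $z=u_2u_3u_4\cdots \preceq u_1^\omega$ is \emph{false} in general. You correctly identify this as the ``main obstacle'' and propose to handle it by comparing prefixes and passing to the limit, but no such argument can succeed, because the inequality simply does not hold.

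Here is an explicit counterexample. Take $\A=\{a,b,c\}$ and the alternating order of Remark~\ref{usual} (at odd positions use $a<b<c$, at even positions reverse it), which satisfies~(*). Set
\[
u_1=aabb,\quad u_2=a,\quad u_3=ab,\quad u_4=abac,\quad u_{n}=abac(ac)^{\,n-4}\ \ (n\ge 4).
\]
One checks directly that $u_1^\omega\succ u_2^\omega\succ u_3^\omega\succ\cdots$ (each consecutive comparison is decided at an even position where the later word has the larger letter). Now
\[
z=u_2u_3u_4\cdots=\underbrace{a}_{u_2}\underbrace{ab}_{u_3}\underbrace{abac}_{u_4}\cdots=aab\,a\cdots,
\qquad
u_1^\omega=(aabb)^\omega=aab\,b\cdots,
\]
so $z$ and $u_1^\omega$ first differ at position $4$ (even), with $z[4]=a$ and $u_1^\omega[4]=b$; hence $z\succ u_1^\omega$, and by the remark following Lemma~\ref{lem11} this gives $z\succ u_1z=x$. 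Thus the particular suffix $u_2u_3\cdots$ does \emph{not} witness $x\notin\Lyn$. The underlying reason your prefix argument breaks down is that $\preceq$ is not left-invariant: from $u_3u_4\cdots$ versus a \emph{shift} of $u_1^\omega$ you cannot return to a comparison with $u_1^\omega$ itself. Relatedly, $u^\omega\prec w^\omega$ and $v^\omega\prec w^\omega$ do \emph{not} imply $(uv)^\omega\preceq w^\omega$; already with $u_2=a$, $u_3=ab$, $u_1=aabb$ one has $(u_2u_3)^\omega=(aab)^\omega\succ(aabb)^\omega$.

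The paper's proof proceeds entirely differently. Assuming $x\in\Lyn$, it uses Lemma~\ref{un1} (each $u_i$ is a prefix of $x$) and Lemma~\ref{un2} (a ``sliding'' statement about prefixes $u_1\cdots u_{k-2}u_k$) to show that $|u_k|<|u_1|$ for every $k\ge 3$. Since only finitely many words have length below $|u_1|$, the sequence $(u_k)$ would then be ultimately constant, contradicting the strict decrease $u_1^\omega\succ u_2^\omega\succ\cdots$. In particular, the proof never compares $x$ with the specific suffix $u_2u_3\cdots$.
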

\begin{proof} Put  $x=u_1u_2\cdots $ and suppose to the contrary that $x\in \Lyn.$  We will show that $|u_k|< |u_1|$ for each $k\geq 3$
and hence the sequence $(u_i)_{i\in \nats}$ is ultimately constant, a contradiction. To see that $|u_k|< |u_1|$ for each $k\geq 3,$  suppose to the contrary that $|u_k|\geq |u_1|$ for some $k\geq 3.$ By iteration of Lemma~\ref{un2}, there exists $2\leq j\leq k-1$ such that
$u_1\cdots u_ju_k$ is a prefix of $x$ and $|u_1\cdots u_{j-1}|\leq |u_k|.$ By Lemma~\ref{un1} we have that $u_k$ is a prefix of $x$ and hence $u_1\cdots u_{j-1}$ is a prefix of $u_k.$ As $(u_1\cdots u_{j-1})^\omega \succ u_j^\omega$ we have that $(u_1\cdots u_{j-1}u_j)^\omega \succ (u_ju_1\cdots u_{j-1})^\omega$ and hence $u_1\cdots u_{j-1}u_j\neq u_ju_1\cdots u_{j-1}.$ Since $u_1\cdots u_{j-1}$ is a prefix of $u_k$ it follows that the suffix of $x$ beginning in $u_ju_k$ is smaller than $x$ contradicting that $x\in \Lyn.$
\end{proof}

\begin{lemma}\label{un4} Let $x\in \A^\nats.$ If $x=v_1v_2v_3\cdots$ with $v_1^\omega\succeq v_2^\omega \succeq \cdots$ then $x\notin \Lyn.$
\end{lemma}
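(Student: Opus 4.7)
The plan is to reduce the statement to Lemma~\ref{un3} by coalescing consecutive $v_i$'s with the same $\omega$-power. Partition $\nats$ into maximal intervals on which $i\mapsto v_i^\omega$ is constant; on the $k$-th interval all the $v_i$ share one primitive root. By iterating Lemma~\ref{lem11} one checks that if $u_1^\omega=u_2^\omega=\cdots=u_m^\omega$ then $(u_1u_2\cdots u_m)^\omega=u_1^\omega$, so concatenating each block produces a (finite or infinite) word $w_k$ whose $\omega$-power equals the common value of $v_i^\omega$ on that block. By construction, consecutive blocks have different $\omega$-powers, and combined with the non-increasing hypothesis we obtain $w_1^\omega\succ w_2^\omega\succ\cdots$.

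If there are infinitely many blocks, then each $w_k$ is finite, $x=\prod_{k\geq 1}w_k$, and Lemma~\ref{un3} immediately gives $x\notin\Lyn$.

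If there are only finitely many blocks, say $w_1,\ldots,w_{K-1}$ followed by one infinite terminal block, then from some index $N$ onward every $v_i$ is a power of a common primitive $r$, so the tail $v_Nv_{N+1}\cdots$ equals $r^\omega$. Hence $x=yr^\omega$ where $y=w_1\cdots w_{K-1}$. If $K=1$ then $x=r^\omega$ is periodic and Remark~\ref{rem2} gives $x\notin\Lyn$. Otherwise iterated Lemma~\ref{lem11} yields $y^\omega\succeq w_{K-1}^\omega\succ r^\omega$, and the finite--infinite variant of Lemma~\ref{lem11} quoted in the paper converts $y^\omega\succ r^\omega$ into $x=yr^\omega\succ r^\omega$. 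Since $r^\omega$ is a proper suffix of $x$, this contradicts $x\in\Lyn$.

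The only delicate point is the second case: Lemma~\ref{un3} cannot be applied directly because the final block is infinite. The key observation that makes it work is that an infinite tail of powers of a primitive $r$ is forced to be exactly $r^\omega$, together with the strict drop $w_{K-1}^\omega\succ r^\omega$ inherited from block maximality, which lets us compare $x$ with its suffix $r^\omega$.
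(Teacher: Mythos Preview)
Your proof is correct and follows essentially the same strategy as the paper's: both reduce to Lemma~\ref{un3} by coalescing runs with equal $\omega$-power into a strictly decreasing sequence, and both handle the remaining ultimately-periodic case by exhibiting the periodic tail as a smaller proper suffix. The only cosmetic difference is in that last step: the paper picks $m$ with $v_{k+1}^m$ not a prefix of $x$ and compares $(v_1\cdots v_k v_{k+1}^m)^\omega$ with $v_{k+1}^\omega$, whereas you invoke the finite--infinite variant of Lemma~\ref{lem11} directly on $y$ and $r^\omega$; your route is arguably a touch cleaner.
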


\begin{proof}Assume to the contrary that $x\in \Lyn.$ Without loss of generality we may assume that each $v_i$ is primitive. We claim $(v_i)_{i\geq1}$ is ultimately periodic. In fact, if the sequence $(v_i)_{i\geq 1}$ is not ultimately periodic, then by concatenating together the consecutive terms of the sequence which are equal, we may write $x=u_1u_2\cdots$  with $u_1^\omega \succ u_2^\omega \succ \cdots$  in contradiction with Lemma~\ref{un3}. 
As $x\in \Lyn$ and hence not periodic,  write $x=v_1\cdots v_kv_{k+1}^\omega$ with $v_1^\omega \succeq \cdots \succeq v_k^\omega \succ v_{k+1}^\omega$ and pick $m$ such that $v_{k+1}^m$ is not a prefix of $x.$ As $(v_1\cdots v_kv_{k+1}^m)^\omega \succ v_{k+1}^\omega,$ it follows that the suffix $v_{k+1}^\omega\prec x$ contradicting that $x\in \Lyn.$ 
\end{proof}


\begin{lemma}\label{infin} If $x$ admits an infinite $\omega$-Lyndon factorisation, then no suffix of $x$ belongs to $\Lyn.$ In particular $x$ does not admit a finite $\omega$-Lyndon factorisation.  
\end{lemma}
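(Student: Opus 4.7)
The plan is to reduce everything to Lemma~\ref{un4}, which already forbids an infinite word in $\Lyn$ from being written as any infinite non-increasing product in $\A^+$. Write $x = \prod_{i=1}^\infty l_i$ with $l_i \in \Lyn \cap \A^+$ and $l_1^\omega \succeq l_2^\omega \succeq \cdots$. For an arbitrary suffix $y$ of $x$ I would first locate where $y$ sits inside the given factorisation: either $y = l_{k+1} l_{k+2} \cdots$ for some $k \geq 0$ (with $k=0$ giving $y=x$), or $y = s\, l_{k+1} l_{k+2} \cdots$ for some $k \geq 1$, where $s$ is a non-empty proper suffix of $l_k$.

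In the first (boundary) case, $y$ is already displayed as an infinite non-increasing product in $\A^+$, and Lemma~\ref{un4} immediately gives $y \notin \Lyn$. In the second (interior) case the key observation is that $s$ is a \emph{proper} non-empty suffix of the $\omega$-Lyndon word $l_k$, so by definition $l_k^\omega \prec s^\omega$. Combining this with the non-increasing chain $l_k^\omega \succeq l_{k+1}^\omega \succeq l_{k+2}^\omega \succeq \cdots$ yields $s^\omega \succ l_{k+1}^\omega \succeq l_{k+2}^\omega \succeq \cdots$, so $y = s\, l_{k+1} l_{k+2} \cdots$ is again an infinite non-increasing product in $\A^+$, and Lemma~\ref{un4} applies.

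For the ``In particular'' clause I would argue by contradiction: if $x = m_1 \cdots m_{k-1} m_k$ were a finite $\omega$-Lyndon factorisation with $m_k \in \Lyn \cap \A^\nats$, then $m_k$ would be an infinite suffix of $x$ belonging to $\Lyn$, contradicting what has just been established. The only subtle point in the whole argument is the case split on where $y$ starts, and the crucial insight that the $\omega$-Lyndon property of $l_k$ is exactly what ensures $s^\omega$ strictly dominates the tail of the factorisation; beyond that Lemma~\ref{un4} does all the heavy lifting, so I do not anticipate any serious obstacle.
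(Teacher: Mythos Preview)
Your argument is correct and follows essentially the same route as the paper: the paper writes any suffix as $y=s_i l_{i+1}l_{i+2}\cdots$ with $s_i$ a (possibly full) suffix of $l_i$, uses $s_i^\omega \succeq l_i^\omega$ to get a non-increasing product, and then invokes Lemma~\ref{un4}. Your explicit boundary/interior case split is just an unfolded version of this, and your handling of the ``In particular'' clause is exactly what the paper intends.
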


\begin{proof} Suppose $x$ admits an infinite $\omega$-Lyndon factorisation $x=l_1l_2l_3\cdots.$ Then any suffix $y$ of $x$ may be written as $y=s_il_{i+1}l_{i+2}\cdots$ with $i\geq 1$ and $s_i$ a suffix of $l_i.$ Since $s_i^\omega \succeq l_i^\omega$ it follows that $s_i^\omega \succeq l_{i+1}^\omega \succeq \cdots.$ By Lemma~\ref{un4}, it follows that $y\notin \Lyn.$
\end{proof}

\begin{corollary}\label{finfact}If an infinite word $x\in \A^\nats$ admits a finite $\omega$-Lyndon factorisation $x=l_1l_2\cdots l_k,$ then it is the unique $\omega$-Lyndon factorisation of $x.$ 
\end{corollary}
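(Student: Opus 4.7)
The strategy is to show that any $\omega$-Lyndon factorisation of $x$ coincides with $l_1 \cdots l_k$. Since $l_k \in \Lyn$ is a suffix of $x$, the contrapositive of Lemma~\ref{infin} rules out an infinite $\omega$-Lyndon factorisation of $x$; so any competing factorisation is of the form $x = m_1 m_2 \cdots m_j$ with $m_1^\omega \succeq \cdots \succeq m_{j-1}^\omega \succ m_j$ and $m_j \in \Lyn \cap \A^\nats$, and the task reduces to deducing $k = j$ and $l_i = m_i$ throughout.

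Let $p = |l_1 \cdots l_{k-1}|$ and $q = |m_1 \cdots m_{j-1}|$ denote the positions in $x$ at which the two infinite suffixes $l_k$ and $m_j$ begin. If $p = q$, then $l_k = m_j$ as infinite suffixes, so $l_1 \cdots l_{k-1} = m_1 \cdots m_{j-1}$ as finite words; applying Proposition~\ref{prop1} to this common finite word forces $k - 1 = j - 1$ and $l_i = m_i$ for $1 \leq i \leq k-1$, completing the proof.

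It remains to exclude $p \neq q$. By symmetry, assume $p < q$, so $m_j$ is a proper suffix of $l_k$ and hence $l_k \prec m_j$ (from $l_k \in \Lyn$). Writing $l_k$ in terms of the $m$-factorisation gives $l_k = u\, m_j$ with $u \in \A^+$, where either $u = m_i m_{i+1}\cdots m_{j-1}$ (when $p$ falls on an $m$-boundary, some $i \leq j-1$) or $u = s\, m_{i+1}\cdots m_{j-1}$ with $s$ a proper non-empty suffix of some $m_i$, $i \leq j-1$ (when $p$ lies strictly inside $m_i$; the degenerate case $i = j-1$ simply gives $u = s$). In the first alternative, iterating Lemma~\ref{lem11} on the non-increasing chain $m_i^\omega \succeq \cdots \succeq m_{j-1}^\omega$ gives $u^\omega \succeq m_{j-1}^\omega$, and then $m_{j-1}^\omega \succ m_j$ yields $u^\omega \succ m_j$. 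In the second, the $\omega$-Lyndon property of $m_i$ supplies the strict step $s^\omega \succ m_i^\omega \succeq m_{i+1}^\omega$, and the same kind of iteration again delivers $u^\omega \succ m_j$. Either way $u^\omega \succ m_j$, and the modified form of Lemma~\ref{lem11} recorded in the paragraph after its statement---for any $w \in \A^+$ and $z \in \A^\nats$, $w^\omega \succ z \iff wz \succ z$---then gives $l_k = u\, m_j \succ m_j$, contradicting $l_k \prec m_j$.

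The only real obstacle I foresee is the bookkeeping in the dichotomy for where $p$ falls inside the $m$-factorisation, and in particular the propagation of a single strict comparison (either $m_{j-1}^\omega \succ m_j$ in the aligned subcase or $s^\omega \succ m_i^\omega$ in the unaligned one) through the surrounding chain of $\succeq$'s to arrive at the strict conclusion $u^\omega \succ m_j$; once this is in hand, the modified Lemma~\ref{lem11} closes the contradiction immediately.
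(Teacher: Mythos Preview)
Your argument is correct and follows essentially the same route as the paper: both invoke Lemma~\ref{infin} to exclude an infinite factorisation, and both reduce the remaining finite case to Proposition~\ref{prop1} after pinning down the last (infinite) term via an iteration of Lemma~\ref{lem11} along a tail of a non-increasing chain together with its infinite variant $w^\omega\star z\iff wz\star z$. The only difference is cosmetic---the paper phrases the key step as ``$l_k$ is necessarily the first $\omega$-Lyndon suffix of $x$'' (iterating along the $l$-chain and then appealing implicitly to symmetry), whereas you derive a direct contradiction from $p<q$ by iterating along the $m$-chain.
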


\begin{proof}It follows from Lemma~\ref{infin} that $x$ does not admit an infinite $\omega$-Lyndon factorisation. It remains to show that $x$ admits no other finite $\omega$-Lyndon factorisation. For this purpose, write $x=ul_k$ with $u\in \A^*$ and observe that if $v\in \A^+$ is any suffix of $u,$  then (by iteration of Lemma~\ref{lem11})  $l_k\preceq vl_k.$ In other words, $vl_k\notin \Lyn$ and hence $l_k$ is necessarily the first $\omega$-Lyndon suffix of $x.$ Unicity now follows from Proposition~\ref{prop1}.
\end{proof}

We next prove unicity of $\omega$-Lyndon factorisations for those infinite words $x$ not admitting a finite $\omega$-Lyndon factorisation.
We first consider the case that $x$ is ultimately periodic:

\begin{lemma}\label{upcase} Assume $x\in \A^\nats$ is ultimately periodic. Then $x$ admits a unique $\omega$-Lyndon factorisation. 
\end{lemma}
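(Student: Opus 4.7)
The plan is to split on whether $x$ admits a finite $\omega$-Lyndon factorisation. If it does, Corollary~\ref{finfact} immediately gives uniqueness. Otherwise, by Proposition~\ref{existence}, $x$ admits an infinite $\omega$-Lyndon factorisation, and the task is to prove uniqueness in that case. Write $x = u p^\omega$ with $p$ primitive.

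A preliminary step is to establish that $p$ admits a unique $\omega$-Lyndon conjugate $l$. For existence, take $l$ to be the conjugate of $p$ of minimal $\omega$-power; for any proper suffix $v$ of $l$, writing $l = u'v$, the rotation $vu'$ is another conjugate of $p$, so $l^\omega \preceq (vu')^\omega$, which via Lemma~\ref{lem11} (items 1$\Leftrightarrow$4 then 1$\Leftrightarrow$2) gives $l^\omega \preceq v^\omega$, strict by primitivity. For uniqueness, suppose $l_1 = st$ and $l_2 = ts$ are two $\omega$-Lyndon conjugates: the $\omega$-Lyndon property of $l_1$ applied to its suffix $t$ yields (via Lemma~\ref{lem11}) $s^\omega \prec t^\omega$, while applying it to $l_2$'s suffix $s$ yields $t^\omega \prec s^\omega$---a contradiction.

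The central step, and the main obstacle, is to show that any infinite $\omega$-Lyndon factorisation $x = l_1 l_2 \cdots$ is eventually equal to $l$. For $i$ sufficiently large the tail $l_i l_{i+1} \cdots$ equals $q^\omega$ for some conjugate $q$ of $p$, so it suffices to prove: every infinite $\omega$-Lyndon factorisation of $y = q^\omega$ has the form $m_1, \ldots, m_r, l, l, l, \ldots$, where $m_1 \cdots m_r = q'$ is the prefix of $q$ obtained by writing $q = q' q''$ with $l = q''q'$, and $m_1, \ldots, m_r$ is the Proposition~\ref{prop1} factorisation of $q'$. Existence of such a factorisation follows from $q^\omega = q' l^\omega$ together with the observation that every suffix of $q'$ has $\omega$-power strictly greater than $l^\omega$ (since $q'$ is a proper suffix of $l$ and $l \in \Lyn$), which guarantees the concatenation is non-increasing. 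For uniqueness, writing $m_1 = q^a q_0$ with $a \geq 0$ and $q_0$ a proper prefix of $q$, primitivity excludes $a \geq 2$ with $q_0 = \varepsilon$, and a case analysis using $m_1^\omega \succeq m_2^\omega$ (combined with the fact that $m_2$ must be an $\omega$-Lyndon prefix of a specific shifted periodic word) rules out the remaining ``long'' cases and forces $m_1$ to match the prescribed first factor; the tail is then handled by recursion.

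Once eventual constancy is established, uniqueness follows quickly. For two infinite factorisations $x = l_1 l_2 \cdots = l'_1 l'_2 \cdots$ with $l_n = l$ for $n \geq N$ and $l'_n = l$ for $n \geq N'$, the prefix parts $w = l_1 \cdots l_{N-1}$ and $w' = l'_1 \cdots l'_{N'-1}$ both satisfy $x = w l^\omega = w' l^\omega$, forcing $|w'| - |w|$ to be a multiple of $|l|$; say $w' = w l^k$ (WLOG). By the minimality of $N$, $l_{N-1}^\omega \succ l^\omega$, so $l_1, \ldots, l_{N-1}, l, \ldots, l$ (with $k$ additional copies of $l$) is a non-increasing $\omega$-Lyndon factorisation of $w l^k$ and coincides by Proposition~\ref{prop1} with $l'_1, \ldots, l'_{N'-1}$; hence $l'_i = l_i$ for all $i$.
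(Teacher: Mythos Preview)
Your overall skeleton is sound and matches the paper: reduce to the case where $x$ has only infinite $\omega$-Lyndon factorisations, show any such factorisation is eventually constant, then synchronise two such factorisations using Proposition~\ref{prop1}. Your preliminary step (unique $\omega$-Lyndon conjugate of $p$) and your final synchronisation step are both correct, and the latter is essentially what the paper does.

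The gap is in the central step. You assert that ``a case analysis using $m_1^\omega \succeq m_2^\omega$ \ldots\ rules out the remaining `long' cases and forces $m_1$ to match the prescribed first factor,'' but you do not carry it out, and it is not routine. Concretely, take the alternating order of Remark~\ref{usual} with $l=ab$: then $(ab)^\omega$ has $\omega$-Lyndon prefixes of every odd length ($a$, $aba$, $ababa$, $abababa$, \ldots), so there are infinitely many candidates for $m_1$ even in the base case $q=l$. Each of these is eventually excluded by the constraint $m_2^\omega\preceq m_1^\omega$, but turning that into a uniform argument that (i) rules out all long $m_1$ and (ii) pins $m_1$ down exactly among the short prefixes is precisely the substantive content you have not supplied. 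Your ``recursion'' on $|q'|$ is fine once $m_1$ is forced, but both the base case $q=l$ and the inductive step rest on this missing analysis.

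The paper avoids this difficulty by a different mechanism. Rather than identifying each $m_i$, it shows $\liminf_i |l_i|<\infty$: by pigeonhole some fixed suffix $x'$ of $x$ has infinitely many $l_i$ as prefixes; by Lemmas~\ref{L1} and~\ref{infin} either $x'$ has only finitely many $\omega$-Lyndon prefixes (done), or $x'=l^\omega$ with $l\in\Lyn$, in which case choosing two such $l_j,l_k$ of length $\geq 2|l|$ forces, via a Fine--Wilf type argument, $l_j\cdots l_{k-1}=l^r$, contradicting Proposition~\ref{prop1}. From bounded $\liminf$, eventual constancy follows at once from the non-increasing condition and primitivity. This sidesteps entirely the case analysis you would need.
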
  

\begin{proof} By Corollary~\ref{finfact} we may suppose that $x$ does not admit a finite $\omega$-Lyndon factorisation. Using Proposition~\ref{existence} let $x=l_1l_2\cdots $ be an infinite $\omega$-Lyndon factorisation of $x.$ We claim the sequence $(l_i)_{i\in \nats}$ is ultimately constant.
It suffices to show that $\liminf_{i\rightarrow \infty}|l_i|<+\infty.$ So pick a suffix $x'$ of $x$ and an infinite set $I\subseteq \nats$ such that $l_i$ is a prefix of $x'$ for each $i\in I.$ Using lemmas~\ref{L1} and \ref{infin} it follows that either  $x'=l^\omega$ for some $l\in \Lyn$ or $x'$ has only finitely many $\omega$-Lyndon prefixes. In the latter case  $\{|l_i|\,:\, i\in I\}$ is clearly bounded. In the former case pick $j<k$ in $I$ such that $x'=\prod_{i\geq j}l_i$ and $\min\{|l_j|,|l_k|\}\geq 2|l|.$ Then as $\omega$-Lyndon words are primitive it follows that $w=l_j\cdots l_{k-1}=l^r$ for some $r\in \nats$ contradicting Proposition~\ref{prop1}.

Having proved that any infinite $\omega$-Lyndon factorisation of $x$ is ultimately contant, unicity of the factorisation now follows. In fact, suppose $x=l'_1l'_2\cdots$ is another $\omega$-Lyndon factorisation with $l'_i=l'$ for all $i$ greater than some $k'$ and $l'\in \Lyn.$ Then since $l$ and $l'$ are each primitive, it follows that $|l|=|l'|$ whence $l=l'$ and the two factorisations must ultimately synchronise, i.e., $l_i=l'_i$ for all sufficiently large $i.$  The rest now follows from Proposition~\ref{prop1}.  \end{proof}



\begin{definition} A factor $u\in \A^+$ of an infinite word $x$ is said to be minimal in $x$ if $u^\omega \preceq v^\omega$ for all factors $v$ of $x$ with $|v|=|u|.$ \end{definition}
\noindent We note that if $u$ is a minimal factor of $x$ then so is every prefix of $u.$ The following lemma will be applied to show that any infinite aperiodic word $x$ admits at most one infinite $\omega$-Lyndon factorisation, and how to construct it.

\begin{lemma}\label{M}
Assume $x\in \A^\nats$ and $u\in \A^+$ is a minimal factor of $x.$ Let $w\in \A^*$ be the longest prefix of $x$ preceding the first occurrence of $u$ in $x.$ 
Assume $x$ admits an infinite $\omega$-Lyndon  factorisation $x=l_1l_2l_3\cdots$ with $\limsup_{i\rightarrow \infty}|l_i|=+\infty.$ Then either $w=\varepsilon$ or $w=l_1\cdots l_k$ for some $k\in \nats.$
\end{lemma}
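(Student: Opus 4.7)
I argue by contradiction. Assume $w \neq \varepsilon$ and $w \neq l_1 \cdots l_k$ for any $k \in \nats$. Setting $p_i = |l_1 \cdots l_{i-1}|$, there is a unique index $j$ with $p_j < |w| < p_{j+1}$. Factor $l_j = st$ where $s$ is the suffix of $w$ of length $q := |w| - p_j \geq 1$ and $t$ is the prefix of length $r := p_{j+1} - |w| \geq 1$ of the tail $x[|w|+1 \ldots]$. Since this tail begins with $u$, the word $t$ either is a proper prefix of $u$ (when $r < |u|$) or has $u$ as a prefix (when $r \geq |u|$). Iteration of Lemma~\ref{lem11} applied to $l_j \in \Lyn$ yields $s^\omega \prec t^\omega$.

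\emph{Case $r \geq |u|$.} Then $l_j$ contains $u$ as a factor at relative positions $q+1, \ldots, q+|u|$. By Remark~\ref{rem2}, the prefix of $l_j$ of length $|u|$ is the $\omega$-smallest length-$|u|$ factor of $l_j$, hence has $\omega$-power $\preceq u^\omega$; minimality of $u$ in $x$ then forces this prefix to equal $u$. So $u$ occurs at $x$-position $p_j + 1 \leq |w|$, contradicting the definition of $w$.

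\emph{Case $r < |u|$.} Here $t$ is a proper prefix of $u$; write $u = tu'$. I exploit $\limsup_i |l_i| = +\infty$ to select $N > j$ such that $l_N$ contains $u$ as a factor. The argument from the previous case, applied now to $l_N$, forces the prefix of $l_N$ of length $|u|$ to equal $u$, so $l_N$ begins with $tu'$. Combining the non-increasing order $l_j^\omega \succeq l_N^\omega$ with $s^\omega \prec t^\omega$, Lemma~\ref{lem11} (which gives $(st)^\omega \prec t^\omega$), and condition (*) applied to the first position $k$ at which $s^\omega$ and $t^\omega$ differ, I derive $l_j^\omega \prec l_N^\omega$, the desired contradiction.

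The hardest step is producing such an $N$. By Lemma~\ref{upcase}, the hypothesis $\limsup |l_i| = +\infty$ implies that $x$ is not ultimately periodic. One then argues that $u$ must recur infinitely often in $x$: if $u$ had only finitely many occurrences, then for all sufficiently large $i$ with $|l_i| \geq |u|$ the prefix of $l_i$ of length $|u|$ would be strictly $\omega$-greater than $u$, and condition (*) combined with the existence of some earlier $l_k$ whose prefix of length $|u|$ equals $u$ (obtained from the first occurrences of $u$ in $x$) would yield $l_k^\omega \prec l_i^\omega$, contradicting the non-increasing order. Given recurrence of $u$ and the arbitrarily large lengths $|l_N|$, a sufficiently long $l_N$ positioned just before an occurrence of $u$ captures that occurrence strictly in its interior.
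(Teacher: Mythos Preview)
Your Case~1 ($r\ge |u|$) is correct and coincides with the paper's opening observation that $u$ cannot lie entirely inside the block $l_k$ crossing position $|w|$.

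Your Case~2 ($r<|u|$), however, has two genuine gaps.

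\textbf{(a) The comparison $l_j^\omega \prec l_N^\omega$ is not justified.}
You invoke ``condition~(*) at the first position $k$ where $s^\omega$ and $t^\omega$ differ.'' This works only when $k\le |t|$: then $l_j[1..k]=s[1..k]$ and $l_N[1..k]=t[1..k]$ differ at position $k$, and (*) gives $l_j^\omega\prec l_N^\omega$. But $k$ can exceed $|t|$. For instance, with $s=ab$, $t=a$ one has $k=2>|t|=1$; then $l_N[1..k]=u[1..k]$, which need not equal $t^\omega[1..k]$, and the argument collapses. More conceptually, from $(st)^\omega\prec t^\omega$ you would need $t^\omega\preceq l_N^\omega$, but an $\omega$-Lyndon word can satisfy $l_N^\omega\prec t^\omega$ for a proper prefix $t$ (e.g.\ in the alternating order of Remark~\ref{usual}, $l_N=ababa$ is $\omega$-Lyndon yet $(ababa)^\omega\prec a^\omega$). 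The minimality of $u$ does not rescue this, since the relevant length-$|u|$ prefix of $t^\omega$ need not be a factor of $x$.

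\textbf{(b) The existence of $N$ is not established.}
Your recurrence argument presupposes ``some earlier $l_k$ whose prefix of length $|u|$ equals $u$,'' but in Case~2 the only known occurrence of $u$ straddles $l_j$ and later blocks, so no such $l_k$ is available. The final sentence (``a sufficiently long $l_N$ positioned just before an occurrence of $u$ captures that occurrence'') assumes a geometric alignment you do not control.

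The paper's proof in this regime is substantially different: it sets $r=\min_{i>k}|l_i|$, picks a later block $l_{j'}$ with $|l_{j'}|\ge p+r$, and uses minimality together with the chain $l_{j'}^\omega\preceq l_j^\omega\prec(u_1\cdots u_p)^\omega$ to force periodicity relations such as $u_1\cdots u_r=(u_1\cdots u_p)^{r/p}$ and $l_j=(u_1\cdots u_r)^\omega[1..|l_j|]$, eventually contradicting primitivity of $l_j$ (or of $l_k$ in the paper's Case~2). That Fine--Wilf style periodicity argument is precisely what is missing from your Case~2.
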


\begin{proof}
Put $n=|u|$ and write $u=u_1u_2\cdots u_n.$ Also write $x=wux'$ with  $x'\in \A^\nats;$  by assumption $wu$ contains exactly one occurrence of $u.$ Assume $w\neq \varepsilon$  and let $k$ be the least positive integer such that $\sum_{i=1}^k|l_i|\geq |w|.$ We must show that $\sum_{i=1}^k|l_i|= |w|.$ Suppose to the contrary that $\sum_{i=1}^k|l_i|> |w|.$ We first note that $u$ cannot be fully contained inside $l_k$ for otherwise, if $v$ denotes the prefix of $l_k$ with $|v|=|u|,$ then as $v\neq u$ and $l_k\in \Lyn$ we  have  $v^\omega\prec u^\omega$ which contradicts that $u$ is minimal. 
Thus we may write $l_k=zu_1\cdots u_p$ for some $z \neq \varepsilon$ and $p<n.$ Let $r=\min\{|l_i|\,:\,i\geq k+1\}$ and pick $j\geq k+1$ with $|l_j|=r.$ 
Also pick $j'>j$ such that $|l_{j'}|\geq p+r.$

\textbf{Case 1: $n\geq p+r$}
\newline
By definition of $r$ it follows  that $u_{p+1}...u_{p+r}$ is a prefix of $l_{k+1}.$ We first claim that 
\begin{equation}u_{p+1}...u_{p+r} = u_1\cdots u_r = (u_1\cdots u_p)^{\frac{r}{p}} \end{equation}
where $(u_1\cdots u_p)^{\frac{r}{p}}$ denotes the prefix of length $r$ of $(u_1\cdots u_p)^\omega.$  In fact, as $u$ is a minimal factor of $x$ we have that $(u_1\cdots u_r)^\omega \preceq (u_{p+1}\cdots u_{p+r})^\omega.$ Furthermore since $l_{k+1}^\omega \preceq l_k^\omega \prec (u_1\cdots u_p)^\omega$ and $u_{p+1}\cdots u_{p+r}$ is a prefix of $l_{k+1}$
it follows that $(u_{p+1}\cdots u_{p+r})^\omega \preceq ((u_1\cdots u_p)^{\frac{r}{p}})^\omega.$ Combining we get $(u_1\cdots u_r)^\omega \preceq (u_{p+1}\cdots u_{p+r})^\omega \preceq ((u_1\cdots u_p)^{\frac{r}{p}})^\omega$ from which (1) follows.

\noindent We also claim that
\begin{equation}
l_j=u_1\cdots u_r.
\end{equation}
Indeed, since $u$ is a minimal factor of $x$ we have $(u_1\cdots u_r)^\omega \preceq l_j^\omega.$ On the other hand $l_j^\omega \preceq l_{k+1}^\omega$ and so by taking the prefix of length $r$ of both words we obtain $l_j^\omega \preceq (u_{p+1}\cdots u_{p+r})^\omega.$ So combining and using (1)  we deduce that $(u_1\cdots u_r)^\omega \preceq l_j^\omega\preceq (u_1\cdots u_r)^\omega$ from which (2) follows.

\noindent Thus we have \[(u_1\cdots u_r)^\omega =l_j^\omega \preceq l_k^\omega \prec (u_1\cdots u_p)^\omega\]  and hence by Lemma~\ref{lem11}

\begin{equation}
(u_1\cdots u_ru_1\cdots u_p)^\omega \prec (u_1\cdots u_pu_1\cdots u_r)^\omega.
\end{equation}

\noindent Using the fact $u_1\cdots u_pu_{p+1}\cdots u_{p+r}$ is a minimal factor of $x$ together with  (1) and (2) gives
\[(u_1\cdots u_pu_1\cdots u_r)^\omega =(u_1\cdots  u_{p+r})^\omega \preceq (l_{j'}[p+r])^\omega\preceq (l_j^\omega [p+r])^\omega=((u_1\cdots u_r)^{\frac{p+r}{r}})^\omega.\]
Together with (3) gives
\begin{equation}(u_1\cdots u_ru_1\cdots u_p)^\omega \prec (u_1\cdots u_pu_1\cdots u_r)^\omega \preceq ((u_1\cdots u_r)^{\frac{p+r}{r}})^\omega.\end{equation}
It follows from (4) that $u_1\cdots u_p=(u_1\cdots u_r)^{\frac pr}$ and hence 
\[u_1\cdots u_ru_1\cdots u_p=(u_1\cdots u_r)^{\frac{p+r}{r}}\] 
which by (4) gives $ ((u_1\cdots u_r)^{\frac{p+r}{r}})^\omega \prec  ((u_1\cdots u_r)^{\frac{p+r}{r}})^\omega,$ a contradiction.

\textbf{Case 2: $n<p+r$.}
\newline
In this case $u_{p+1}\cdots u_n$ is a prefix of $l_{k+1}$ and the same arguments used to prove (1) shows that 
\begin{equation}
u_1\cdots u_n = (u_1 \cdots u_p)^{\frac{n}{p}}.
\end{equation}
As $l_k^\omega =(zu_1 \cdots u_p)^\omega \prec (u_1 \cdots u_p)^\omega$, it follows that $|l_k|=|z|+p < n$ for otherwise  $u=u_1\cdots u_n$ would be a prefix of $l_k$ which would imply an earlier occurrence of $u$ in $x.$ Thus
\begin{equation}
zu_1\cdots u_p=(u_1 \cdots u_p)^{\frac{|z|+p}{p}}=(u_1 \cdots u_p)^au_1\cdots u_q
\end{equation}
for some choice of integers $a,q$ and as $l_k$ is primitive we have that $1\leq q <p.$ 

Finally, we have $z=(u_1 \cdots u_p)^{a-1}u_1\cdots u_q$ and hence
\begin{equation}
u_1 \cdots u_pu_1\cdots u_q=u_1 \cdots u_qu_1\cdots u_p
\end{equation}
from which it follows that $l_k$ is not primitive, a contradiction. 
\end{proof}

\begin{proposition}\label{uniinf} Let $x\in \A^\nats$ be an aperiodic infinite word and $x=l_1l_2l_3\cdots=l_1'l_2'l_3'\cdots $ two infinite $\omega$-Lyndon factorisations of $x.$ Then $l_i=l'_i$ for each $i\in \nats.$  
\end {proposition}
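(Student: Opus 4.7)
The plan is to produce arbitrarily long common prefixes of the two factorisations by exploiting the cut-point structure provided by Lemma~\ref{M}. My first move is to check that Lemma~\ref{M} applies to both factorisations: aperiodicity of $x$ precludes $(|l_i|)_{i\geq 1}$ being bounded, since otherwise the non-increasing sequence $(l_i^\omega)$ would range over a finite subset of $\Lyn$ and become eventually constant, forcing $x$ to be ultimately periodic; the analogous argument applies to $(l_i')_{i\geq 1}$. Hence $\limsup_{i\to\infty}|l_i|=\limsup_{i\to\infty}|l_i'|=+\infty$.

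For each $n\in\nats$ I would select a minimal factor $u^{(n)}$ of $x$ of length $n$ whose first occurrence in $x$ is as early as possible; denote that starting position by $p(n)$ and let $w^{(n)}$ be the prefix of $x$ of length $p(n)-1$. Applying Lemma~\ref{M} to each factorisation, $w^{(n)}$ is either empty or simultaneously of the form $l_1\cdots l_{k_n}$ and $l_1'\cdots l_{k'_n}'$. In the nontrivial case, uniqueness of the finite $\omega$-Lyndon factorisation (Proposition~\ref{prop1}) forces $k_n=k'_n$ and $l_i=l_i'$ for $1\leq i\leq k_n$.

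The main obstacle is showing that $p(n)$ is unbounded. Suppose for contradiction that $p(n)\leq C$ for every $n$. A pigeonhole argument produces a fixed position $p_0$ and an infinite set $N\subseteq\nats$ such that, writing $y=x_{p_0}x_{p_0+1}\cdots$ for the corresponding suffix of $x$, the prefix $y[n]$ is a minimal factor of $x$ for every $n\in N$. Fix $k\geq 1$ and let $y'$ denote the suffix of $y$ obtained by deleting its first $k$ letters. If $y=y'$ then $y$ is purely periodic and $x$ is ultimately periodic, contradicting aperiodicity. Otherwise let $j$ be the first position at which $y$ and $y'$ disagree and pick $n\in N$ with $n\geq j$: since $y[n]$ is minimal and $y'[n]$ is a factor of $x$, $y[n]^\omega\preceq y'[n]^\omega$; the two words are distinct of equal length so their $\omega$-powers are distinct, making the inequality strict, and condition~(*) then yields $y\prec y'$. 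In either scenario $y$ would be $\omega$-Lyndon, contradicting Lemma~\ref{infin}.

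With $p(n)$ unbounded, choose a subsequence $(n_j)$ along which $|w^{(n_j)}|=p(n_j)-1\to\infty$. The second paragraph gives $l_i=l_i'$ for all $i\leq k_{n_j}$; since $|l_1\cdots l_{k_{n_j}}|\to\infty$ while each $|l_i|\geq 1$, necessarily $k_{n_j}\to\infty$, so $l_i=l_i'$ for every $i\in\nats$.
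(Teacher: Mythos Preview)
Your argument is correct, but it takes a longer road than the paper's. Both proofs rest on Lemma~\ref{M} together with the observation that aperiodicity forces $\limsup_i|l_i|=\limsup_i|l_i'|=+\infty$. The paper, however, needs only a \emph{single} application of Lemma~\ref{M}: it assumes (after passing to a suffix) that $l_1\neq l_1'$, notes via Lemma~\ref{infin} that $x\notin\Lyn$, and hence that some minimal factor $u$ of $x$ is not a prefix of $x$; the prefix $w\neq\varepsilon$ preceding its first occurrence is then, by Lemma~\ref{M}, simultaneously $l_1\cdots l_k$ and $l_1'\cdots l_{k'}'$, and Proposition~\ref{prop1} immediately gives the contradiction $l_1=l_1'$. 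You instead prove directly that the cut positions $p(n)$ are unbounded, which requires the extra pigeonhole step and the verification that the limiting suffix $y$ would lie in $\Lyn$. That detour is valid (and the use of Lemma~\ref{infin} to rule out $y\in\Lyn$ is nice), but it is doing more work than necessary: one well-chosen minimal factor already suffices. A small wording slip: in your first paragraph it is the $l_i$, not the $l_i^\omega$, that range over a finite subset of $\Lyn$; periodic words are never $\omega$-Lyndon.
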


\begin{proof} Suppose to the contrary that $l_i\neq l'_i$ for some $i\in \nats.$ Short of replacing $x$ by some suffix of $x,$ we may assume that $l_1\neq l'_1.$ By Lemma~\ref{infin} it follows that $x\notin \Lyn$ and hence $x$ contains a minimal factor $u$ which is not a prefix of $x.$   Let $w\in \A^+$ denote the prefix of $x$ which precedes the first occurrence of $u$ in $x.$ As $x$ is aperiodic it follows that $\limsup_{i\rightarrow \infty}|l_i|= \limsup_{i\rightarrow \infty}|l'_i|=+\infty.$
By Lemma~\ref{M} it follows that there exist $k,k'\in \nats$ such that $w=l_1\ldots l_k=l'_1\cdots l'_{k'}$ contradicting Proposition~\ref{prop1}. \end{proof}

\begin{corollary}Each infinite word $x\in \A^\nats$ admits  precisely one $\omega$-Lyndon factorisation.
\end{corollary}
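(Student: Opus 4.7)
The corollary is essentially a summary that collects the three cases already treated separately, so the plan is to assemble them by a short case analysis rather than introduce any new combinatorial ingredient.

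First I would invoke Proposition~\ref{existence} to note that every $x\in \A^\nats$ admits at least one $\omega$-Lyndon factorisation (finite or infinite). So the entire content of the corollary is uniqueness, and the plan is to split on the kind of factorisation that exists.

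Case one is when $x$ admits a finite $\omega$-Lyndon factorisation $x=l_1\cdots l_k$. Then Corollary~\ref{finfact} already states that this is the unique $\omega$-Lyndon factorisation of $x$, so nothing further is needed. Case two is when $x$ admits no finite $\omega$-Lyndon factorisation; by Proposition~\ref{existence}, $x$ then admits an infinite $\omega$-Lyndon factorisation, and by Lemma~\ref{infin} no infinite $\omega$-Lyndon factorisation of $x$ can coexist with a finite one. So it suffices to show that the infinite factorisation is itself unique. Here I would split once more according to whether $x$ is ultimately periodic or aperiodic: if $x$ is ultimately periodic, uniqueness is exactly Lemma~\ref{upcase}; if $x$ is aperiodic, uniqueness is Proposition~\ref{uniinf}.

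There is no real obstacle left: all cases are handled by results already established earlier in the paper. The only thing to be careful about is making sure the case distinction is exhaustive, i.e.\ that a single $x$ cannot simultaneously admit both a finite and an infinite $\omega$-Lyndon factorisation; this is precisely the second assertion of Lemma~\ref{infin}. With that observation the proof reduces to a three-line argument citing \ref{existence}, \ref{finfact}, \ref{infin}, \ref{upcase} and \ref{uniinf} in turn.
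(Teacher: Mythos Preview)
Your proposal is correct and follows essentially the same route as the paper: existence via Proposition~\ref{existence}, then uniqueness by splitting into the finite-factorisation case (Corollary~\ref{finfact}) versus the infinite-factorisation case, the latter being handled by Lemma~\ref{upcase} for ultimately periodic $x$ and Proposition~\ref{uniinf} for aperiodic $x$. Your explicit invocation of Lemma~\ref{infin} to preclude coexistence of finite and infinite factorisations is slightly redundant since Corollary~\ref{finfact} already absorbs that step, but it does no harm.
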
  

\begin{proof} Existence follows from Proposition~\ref{existence}. For unicity, if $x$ admits a finite $\omega$-Lyndon factorisation, then unicity follows from Corollary~\ref{finfact}. So we may suppose that  $x$ admits only infinite $\omega$- Lyndon factorisations. If  $x$ is ultimately periodic unicity follows from  Lemma~\ref{upcase} while if $x$ is aperiodic unicity follows from Proposition~\ref{uniinf}. \end{proof}


\end{document}